\newtheorem{theorem}{Theorem}[section]
\newtheorem{lemma}[theorem]{Lemma}
\newtheorem{proposition}[theorem]{Proposition}
\numberwithin{equation}{section}
\def\R{{\mathbb R}}
\def\E{{{\mathbb E}\,}}
\def\N{{\mathbb N}}
\def\Var{{\mathop {{\rm Var\, }}}}
\def\square{{\vcenter{\vbox{\hrule height.3pt
        \hbox{\vrule width.3pt height5pt \kern5pt
           \vrule width.3pt}
        \hrule height.3pt}}}}
\def\tlint{{- \kern-0.85em \int \kern-0.2em}}
\def\dlint{{- \kern-1.05em \int \kern-0.4em}}
\def\si{\sigma}
\def\cF{{\cal F}}
\def\Om{{\Omega}}
\def\cF{{\cal F}}
\def\si{{\sigma}}
\def\Om{{\Omega}}
\def\si{{\sigma}}
\def \eref#1{\hbox{(\ref{#1})}}
\def \eref#1{\hbox{(\ref{#1})}}
\def\si{{\sigma}}
\newenvironment{proof}[1][Proof]{\noindent\textit{#1.} }{\hfill \rule{0.5em}{0.5em}}
\begin{document}

\title{Central limit theorem for functionals of two independent fractional Brownian motions}
\date{\empty}

\author{ David Nualart\thanks{ D. Nualart is supported by the NSF grant
DMS1208625.} \ \
and \ Fangjun Xu\thanks{F. Xu is supported in part by the Robert Adams Fund.}\\
Department of Mathematics \\
University of Kansas \\
Lawrence, Kansas, 66045 USA}

\maketitle

\begin{abstract}
\noindent  We prove a central limit theorem for functionals of two independent $d$-dimensional fractional Brownian motions with the same Hurst index $H$ in $(\frac{2}{d+1},\frac{2}{d})$ using the method of moments.

\vskip.2cm \noindent {\it Keywords:} fractional Brownian motion, intersection local time, local time, method of moments.

\vskip.2cm \noindent {\it Subject Classification: Primary 60F05;
Secondary 60G15, 60G22.}
\end{abstract}

\section{Introduction}

Let $\left\{ B^H_t= (B^1_t,\dots,
B^d_t), t\geq 0\right\} $ be a $d$-dimensional fractional Brownian
motion (fBm) with Hurst index $H$ in  $(0,1)$. Let $B^{H,1}$ and $B^{H,2}$ be two independent copies of  $B^H$. If $Hd<2$,  then the intersection local time of $B^{H,1}$ and $B^{H,2}$ exists (see \cite{nol}) and can be defined as 
\[
\alpha(t_1, t_2)=\int^{t_1}_0\int^{t_2}_0 \delta(B^{H,1}_u-B^{H,2}_v)\, du\, dv,
\]
where $\delta$ is the Dirac delta function. For any $t_1$ and $t_2$ in $\R^+$, define
\[
X(t_1,t_2)=B^{H,1}_{t_1}-B^{H,2}_{t_2}.
\]
We see that $X=\{X(t_1,t_2), t_1,t_2\in\R^+\}$ is a $(2,d)$-Gaussian random field and satisfies the following scaling property: for any $c>0$, 
\begin{equation}
\big\{X(ct_1,ct_2), t_1,t_2\in\R^+\big\}\overset{\mathcal{L}}{=}\big\{c^HX(t_1,t_2), t_1,t_2\in\R^+\big\}.   \label{scale}
\end{equation}
If $Hd<2$, then, for any $x$  in $\R^d$ and rectangle $E=[a_1,b_1]\times[a_2,b_2]$ in $\R_+^2$, the local time $L(x, E)$ of $X$ exists and is continuous in $x$, see \cite{wu_xiao}. When $E=[0,t_1]\times[0,t_2]$, $\alpha(t_1, t_2)=L(0,E)$. Throughout this paper, we assume $Hd<2$ to ensure the existence and the continuity of $L(x, E)$. 

For any integrable function $f:\R^d\to\R$, one can easily show the following convergence in law in the space $C([0,\infty)^2)$, as $n$ tends to infinity,
\begin{equation*}
\Big\{ n^{Hd-2}\int^{nt_1}_0\int^{nt_2}_0 f(B^{H,1}_u-B^{H,2}_v)\, du\, dv,\; t_1, t_2>0 \Big\} \overset{\mathcal{L}}{\longrightarrow } \Big\{ \alpha(t_1,t_2) \int_{\R^d} f(x)\, dx,\;  t_1, t_2>0 \Big\}.
\end{equation*}
In fact, letting $E=[0,t_1]\times[0,t_2]$, using the scaling property of the process $X(u,v)$ in \eref{scale} and then applying the continuity of $L(x,E)$, we get
\begin{eqnarray*}
n^{Hd-2}\int^{nt_1}_0\int^{nt_2}_0 f(B^{H,1}_u-B^{H,2}_v)\, du\, dv
&\overset{\mathcal{L}}{=} &  n^{Hd}\int^{t_1}_0\int^{t_2}_0 f\big(n^H (B^{H,1}_u-B^{H,2}_v) \big)\, du\, dv \\
&=                                         &  n^{Hd} \int_{\R^d}f(n^H x)L(x,E)\, dx\\
&=                                         &   \int_{\R^d} f(x)L(\frac{x}{n^H},E)\, dx\\
&\overset{a.s.}{\longrightarrow } &\alpha(t_1,t_2) \int_{\R^d} f(x)\, dx,
\end{eqnarray*}
where $\overset{a.s.}{\longrightarrow }$ denotes the almost sure convergence.

If we assume $\int_{\R^d} f(x)\, dx=0$, then the random variable
\[
n^{Hd-2} \int^{nt_1}_0\int^{nt_2}_0 f(B^{H,1}_u-B^{H,2}_v)\, du\, dv
\]
converges in law to $0$ as $n$ tends to infinity. It is natural to ask if there is a $\beta>Hd-2$ such that 
\[
n^{\beta}\int^{nt_1}_0\int^{nt_2}_0 f(B^{H,1}_u-B^{H,2}_v)\, du\, dv
\]
converges to a nontrivial random variable. This will be proved to be true.
In order to  formulate this result  we introduce the
following space of functions. Fix a number $\beta\in(0,2)$, define
\[
H^{\beta}_0=\Big\{f\in L^1(\R^d):\, \int_{\R^d} |f(x)||x|^{\beta}\, dx<\infty \quad\text{and}\quad \int_{\R^d} f(x)\, dx=0 \Big\}. \label{beta}
\]

For any $f\in H^{\beta}_0$, by Lemma 4.1 in \cite{hnx}, the quantity
\[
\|f\|^2_{\beta}=-\int_{\R^{2d}} f(x)f(y) |y-x|^{-\beta}\, dx\, dy
\]
is finite and nonnegative. The next theorem is the main result of this paper.
\begin{theorem}  \label{thm1} Suppose $\frac{2}{d+1}<H<\frac{2}{d}$ and $f\in H^{\frac{2}{H}-d}_0$. Then, for any $t_1$ and $t_2>0$,
\[
n^{\frac{Hd-2}{2}} \int^{nt_1}_0\int^{nt_2}_0 f(B^{H,1}_u-B^{H,2}_v)\, du\, dv\overset{\mathcal{L}}{\longrightarrow} \sqrt{D_{H,d}}\, \|f\|_{\frac{2}{H}-d}\, \sqrt{\alpha(t_1, t_2)}\, \zeta,
\]
as $n\to\infty$, where 
\[
D_{H,d}=\frac{4}{(2\pi)^{\frac{d}{2}}}\int_{0}^\infty\int_{0}^\infty
(u^{2H}+v^{2H})^{-\frac{d}{2}}  \big( 1- e^{ -\frac{1}{2}\frac{1} {u^{2H}+v^{2H}} }\big) du\, dv
\]
and $\zeta$ is a standard normal random variable independent of the processes $B^{H,1}$ and $B^{H,2}$.
\end{theorem}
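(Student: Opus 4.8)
The plan is to use the method of moments, comparing $F_n:=n^{\frac{Hd-2}{2}}\int_0^{nt_1}\int_0^{nt_2}f(B^{H,1}_u-B^{H,2}_v)\,du\,dv$ with the conditionally Gaussian target $Y=\sqrt{D_{H,d}}\,\|f\|_{\frac2H-d}\sqrt{\alpha(t_1,t_2)}\,\zeta$. Since $\zeta$ is standard normal and independent of $B^{H,1}$ and $B^{H,2}$, the moments of $Y$ are $\E\big[Y^{2k+1}\big]=0$ and $\E\big[Y^{2k}\big]=(2k-1)!!\,D_{H,d}^{\,k}\,\|f\|_{\frac2H-d}^{2k}\,\E\big[\alpha(t_1,t_2)^k\big]$. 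It therefore suffices to show that $\E[F_n^m]\to\E[Y^m]$ for every $m\ge1$ and that the law of $Y$ is determined by its moments; the latter reduces to a Carleman-type condition, which holds once one has the standard factorial-type bound on $\E[\alpha(t_1,t_2)^k]$ for the intersection local time.

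First I would pass to the Fourier side, writing $f(x)=(2\pi)^{-d}\int_{\R^d}e^{i\xi\cdot x}\widehat f(\xi)\,d\xi$, so that $\E[F_n^m]$ becomes a $(2m)$-fold time integral over $[0,nt_1]^m\times[0,nt_2]^m$ against an $(md)$-fold frequency integral of $\prod_{j=1}^m\widehat f(\xi_j)$ times the Gaussian characteristic function $\E\exp\big(i\sum_{j}\xi_j\cdot(B^{H,1}_{u_j}-B^{H,2}_{v_j})\big)$, which by independence factors into the characteristic functions of $\sum_j\xi_j\cdot B^{H,1}_{u_j}$ and $\sum_j\xi_j\cdot B^{H,2}_{v_j}$. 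The normalization $n^{(Hd-2)/2}$ is exactly the one for which, after the macroscopic rescaling $u_j\mapsto nu_j$, $v_j\mapsto nv_j$ and use of the self-similarity \eref{scale}, the powers of $n$ balance and the problem localizes around the coincidence set $\{B^{H,1}_u=B^{H,2}_v\}$. Here the hypotheses enter sharply: since $\int f=0$ we have $\widehat f(0)=0$, and the moment condition yields $|\widehat f(\xi)|\le C|\xi|^{\frac2H-d}$ — with $\frac2H-d\in(0,1)$ precisely when $\frac2{d+1}<H<\frac2d$ — and it is this small-frequency decay that makes the local frequency integral converge and assembles the kernel $|y-x|^{-(\frac2H-d)}$ into $\|f\|^2_{\frac2H-d}$.

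The heart of the argument is the asymptotic evaluation of these integrals, for which I would rely on the local nondeterminism of fractional Brownian motion to bound, from both sides, the determinants of the covariance matrices that arise after ordering the times $u_{\sigma(1)}<\cdots<u_{\sigma(m)}$ and $v_{\tau(1)}<\cdots<v_{\tau(m)}$. I expect the dominant contribution to come only from configurations in which the $2k$ indices group into $k$ pairs whose two times are mutually close while distinct pairs stay macroscopically separated: within each pair the rescaled local fluctuation contributes the factor $D_{H,d}\|f\|^2_{\frac2H-d}$ — the integral defining $D_{H,d}$ being, up to an elementary prefactor, the $(s,r)$-integral over the positive quadrant of $(s^{2H}+r^{2H})^{-d/2}\big(1-e^{-1/(2(s^{2H}+r^{2H}))}\big)$, where the subtraction of $1$ encodes the cancellation $\widehat f(0)=0$ — while the $k$ well-separated centres reproduce, through the occupation-density interpretation of $\alpha(t_1,t_2)$, the factor $\E[\alpha(t_1,t_2)^k]$. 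Summing over the $(2k-1)!!$ pairings yields the claimed even moment; for odd $m$ no perfect pairing exists and the leftover unpaired index, carrying a factor tending to $\widehat f(0)=0$, forces the contribution to vanish.

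The main obstacle, and the bulk of the work, is to prove that every non-pairing configuration is asymptotically negligible. This requires uniform estimates, over all orderings of the time variables and all ways of clustering them, of the frequency integrals, and the estimates must be tight because the exponent $\frac2H-d$ sits exactly at the threshold where $\|f\|_{\frac2H-d}$ is finite and the per-pair constant $D_{H,d}$ converges; the condition $H>\frac2{d+1}$ keeps this exponent positive while $H<\frac2d$ keeps $Hd<2$. The two-sided local nondeterminism bounds, combined with careful bookkeeping of the scaling of each cluster and the $(\frac2H-d)$-decay of $\widehat f$, are what I would use to dominate these remainder terms and to justify passing to the limit inside the integrals. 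Establishing the factorial bound on $\E[\alpha(t_1,t_2)^k]$ needed for the moment-determinacy of $Y$ is then a comparatively routine final step.
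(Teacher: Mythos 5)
Your proposal follows essentially the same route as the paper's proof: the method of moments implemented on the Fourier side (exploiting the cancellation $\widehat f(0)=0$ coming from $\int_{\R^d} f=0$), two-sided local nondeterminism bounds for the fBm, and the identification of the dominant paired/clustered time configurations, which produce the even moments $(2m-1)!!\,D_{H,d}^{m}\,\|f\|_{\frac 2H-d}^{2m}\,\E\big[\alpha(t_1,t_2)^{m}\big]$ while odd moments vanish for exactly the reason you give; this matches the paper's decomposition into permutations, its change of variables $w_{2k-1}=n(u_{2k}-u_{2k-1})$, and its negligibility lemmas for non-pairing configurations. One small slip, which does not affect the structure of the argument: in your final paragraph the roles of the two hypotheses are interchanged --- it is $H<\frac 2d$ that keeps the exponent $\frac 2H-d$ positive, while $H>\frac{2}{d+1}$ keeps it at most $1$ (equivalently $Hd>2-H$, the condition the paper needs in its Lemma \ref{lema3}), as you in fact stated correctly earlier in the proposal.
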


In \cite{hnx}, Hu, Nualart and I proved the following functional central limit theorem
\[
  \Big\{  n^{\frac{Hd-1}{2}}\int_{0}^{nt} f(B^H(s))\, ds\,,
  \ t\ge 0\Big\} \ \ \overset{\mathcal{L}}{\longrightarrow } \ \
  \Big\{  \sqrt{C_{H,d}} \, \| f\|_{\frac{1}{H}-d}\, W (L_{t}(0))\,, t\ge
  0\Big\},
\]
where $W$ is a real-valued standard Brownian motion independent of $B^H$ and $L_t(x)$ is the local time of $B^H$. This paper can be viewed as an extension of the result in \cite{hnx}. To prove our main result Theorem \ref{thm1}, we use the method of moments. Some techniques in \cite{hnx} will be used, but new ideas are needed. The  basic idea of the approach used in this paper is to  apply the method of moments to a functional. When dealing with an integral on $[0,t_1]^{2m}\times[0,t_2]^{2m}$, with respect to the measure $du_1  \cdots du_{2m}dv_1  \cdots dv_{2m}$ , we make the change of variables 
$w_{2k-1}=n(u_{2k}-u_{2k-1})$, $w_{2k}=u_{2k}$, $s_{2k-1}=n(v_{2k}-v_{2k-1})$, $s_{2k}=v_{2k}$, $1\le k\le m$. Then, the increments of $B^{H,1}-B^{H,2}$ in small rectangles will be responsible for the independent noise appearing in the limit. This methodology could be applied to other examples of functionals and multi-parameter processes. 

Note that the constant $D_{H,d}$ is finite for any $H>\frac{2}{d+2}$.  We conjecture that our result is also true for $\frac{2}{d+2}<H<\frac{2}{d}$, but we have not been able to show our result in the case $H\leq\frac{2}{d+2}$. The main reason is that we need to use Fourier analysis in the proof of our result. For example, we need to assume $Hd>2-H$ in Lemma \ref{lema3}.

In the Brownian motion case ($H=\frac{1}{2}$ and $d=3$), the functional version of Theorem \ref{thm1} can be proved using a theorem by Weinryb and Yor \cite{weinryb_yor}. A second order result for two independent Brownian motions in the critical case $d=4$ and $H=\frac{1}{2}$ was proved by Le Gall \cite{LeGall}. However, not nearly as much has been done for the case $H\neq \frac{1}{2}$ and $Hd=2$.  The general asymptotic results for additive functionals of $k$ independent Brownian motions were obtained by Biane \cite{biane}. This paper extends some results in \cite{biane} to fractional Brownian motions. General extensions are still largely unknown. 

After some preliminaries in Section 2, Section 3 is devoted to the proof of Theorem \ref{thm1},
based on the method of moments. Throughout this paper, if not mentioned otherwise, the letter $c$, 
with or without a subscript, denotes a generic positive finite
constant whose exact value is independent of $n$ and may change from
line to line. We use $\iota$ to denote $\sqrt{-1}$.

\section{Preliminaries}
Let $\left\{ B^H_t= (B^1_t,\dots,
B^d_t), t\geq 0\right\} $ be a $d$-dimensional fractional Brownian
motion with Hurst index $H$ in $(0,1)$, defined on some probability space $(\Om, \cF, P)$. That is, the components
of $B^H$ are independent centered Gaussian processes with covariance function
\[
\E\big(B^i_t B^i_s\big)=\frac{1}{2}\big(t^{2H}+s^{2H}-|t-s|^{2H}\big).
\]  
We shall use the following property of the fractional Brownian motion $B^H$.
\begin{lemma} \label{lnd}
Given $n\ge 1$, there exist two constants $c_{1}$ and $c_{2}$ depending only on  $n$, $H$ and $d$, such that for any $0=s_0<s_1 <\cdots < s_n $ and $x_i \in \mathbb{R}^d$, $1\le i \le n$, we have
\[
c_{1} \sum_{i=1}^n |x_i|^2 (s_i -s_{i-1})^{2H} \le \mathrm{Var} \Big( \sum_{i=1}^n x_i  \cdot (B^H_{s_i} -B^H_{s_{i-1}}) \Big) \le c_{2}  \sum_{i=1}^n |x_i|^2 (s_i -s_{i-1})^{2H}.
\]
\end{lemma}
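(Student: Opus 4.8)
\noindent The plan is to reduce the inequality to a single one-dimensional fBm and then treat the two bounds separately, the upper bound being elementary while the lower bound carries all the real content.

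First I would reduce to $d=1$. Writing $x_i=(x_i^1,\dots,x_i^d)$ and using that the coordinate processes $B^1,\dots,B^d$ are independent one-dimensional fBm's, the variance splits across coordinates,
\[
\Var\Big(\sum_{i=1}^n x_i\cdot(B^H_{s_i}-B^H_{s_{i-1}})\Big)=\sum_{l=1}^d \Var\Big(\sum_{i=1}^n x_i^l\,(B^l_{s_i}-B^l_{s_{i-1}})\Big),
\]
while $\sum_{i=1}^n|x_i|^2(s_i-s_{i-1})^{2H}=\sum_{l=1}^d\sum_{i=1}^n (x_i^l)^2(s_i-s_{i-1})^{2H}$. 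Hence it suffices to prove, for a single one-dimensional fBm $B$ and scalars $a_1,\dots,a_n$,
\[
c_1\sum_{i=1}^n a_i^2(s_i-s_{i-1})^{2H}\le \Var\Big(\sum_{i=1}^n a_i(B_{s_i}-B_{s_{i-1}})\Big)\le c_2\sum_{i=1}^n a_i^2(s_i-s_{i-1})^{2H},
\]
and then sum over $l$, using $\sum_l (x_i^l)^2=|x_i|^2$.

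For the upper bound, set $\Delta_i=B_{s_i}-B_{s_{i-1}}$, so that $\|\Delta_i\|_{L^2}=(s_i-s_{i-1})^H$. By the triangle inequality in $L^2(\Omega)$ and then the Cauchy--Schwarz inequality,
\[
\Big\|\sum_{i=1}^n a_i\Delta_i\Big\|_{L^2}\le \sum_{i=1}^n |a_i|(s_i-s_{i-1})^H\le \sqrt{n}\,\Big(\sum_{i=1}^n a_i^2(s_i-s_{i-1})^{2H}\Big)^{1/2},
\]
which gives the upper bound with $c_2=n$; this is precisely why the constants are allowed to depend on $n$.

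The lower bound is the \emph{main obstacle}, and it is exactly the property of (strong) local nondeterminism of fBm. The cleanest route uses the harmonizable representation $B_t=\kappa_H\int_{\R}(e^{\iota t\xi}-1)|\xi|^{-H-1/2}\,\widehat W(d\xi)$, which turns the variance into the spectral integral
\[
\Var\Big(\sum_{i=1}^n a_i\Delta_i\Big)=\kappa_H^2\int_{\R}\Big|\sum_{i=1}^n a_i\big(e^{\iota s_i\xi}-e^{\iota s_{i-1}\xi}\big)\Big|^2\frac{d\xi}{|\xi|^{2H+1}}.
\]
The task is to bound this from below by $c_1\sum_{i=1}^n a_i^2(s_i-s_{i-1})^{2H}$; equivalently, writing $b_i=a_i(s_i-s_{i-1})^H$, one must show that the correlation matrix $R$ of the normalized increments $\Delta_i/\|\Delta_i\|_{L^2}$ satisfies $a^{\top}\Sigma a=b^{\top}Rb\ge \lambda_{\min}(R)\,\|b\|^2$ with $\lambda_{\min}(R)$ bounded below by a constant uniformly over all configurations $0=s_0<\cdots<s_n$. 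This uniform spectral gap is Pitt's strong local nondeterminism for fBm, and the scale-invariance needed to control degenerate configurations (some gaps $s_i-s_{i-1}$ close to $0$) comes from the self-similarity and stationarity of the increments of $B^H$. I expect this step to be where the real difficulty lies: the naive induction that peels off the innovation $\Delta_n-\E[\Delta_n\mid \Delta_1,\dots,\Delta_{n-1}]$ fails to close because of the cross terms with the projection, so one genuinely has to invoke the local nondeterminism estimate rather than a direct triangular decomposition.
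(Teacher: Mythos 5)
Your proposal is correct and takes essentially the same route as the paper: the paper dismisses the upper bound as obvious (you supply the elementary triangle-inequality/Cauchy--Schwarz argument giving $c_2=n$), and for the lower bound it, like you, gives no proof but invokes the local nondeterminism property of fractional Brownian motion with references to the literature, which is exactly the role Pitt's strong local nondeterminism plays in your sketch. Your extra details -- the reduction to $d=1$ and the spectral/eigenvalue reformulation -- are sound but serve the same purpose as the paper's citation rather than constituting a genuinely different argument.
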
 
\begin{proof} The second inequality is obvious. So it suffices to show the first one, which follows from the local nondeterminism property of the fractional Brownian motion; see, e.g., \cite{biane} and \cite{hnx}. 
\end{proof}

The inequalities in Lemma \ref{lnd} can be rewritten as
\begin{equation} \label{eq1}
c_{1} \sum_{i=1}^n \Big|\sum_{j=i}^n x_j \Big|^2  (s_i -s_{i-1})^{2H} \le \mathrm{Var} \Big( \sum_{i=1}^n x_i  \cdot B^H_{s_i}  \Big) \le c_{2}  \sum_{i=1}^n \Big|  \sum_{j=i}^n x_j \Big|^2(s_i -s_{i-1})^{2H}.
\end{equation}
The next lemma gives a formula for the moments of the random variable $\sqrt{\alpha(t_1, t_2)}\, \zeta$ appearing in Theorem \ref{thm1}. 
\begin{lemma} \label{lema2.1} For any $p\in \N$, 
\begin{eqnarray}
\E \big[\sqrt{\alpha(t_1, t_2)}\, \zeta \big]^{p} \label{2.1}
&=&\begin{cases} 
\frac {(2m-1)!! }{(2\pi )^{\frac{md}{2}}} \int_{E^m} \big(\det A (u, v)\big)^{-\frac{1}{2}}  du\, dv
&\hbox{if $p=2m$},\\
\\
0 & \hbox{otherwise}, 
\end{cases}   \notag
\end{eqnarray}
where $E=[0,t_1]\times[0,t_2]$ and $A(u,v)$ is the covariance matrix of the Gaussian random field
\[
\big(B^{H,1}_{u_i}-B^{H,2}_{v_i}, 1\leq i\leq m\big).
\]
\end{lemma}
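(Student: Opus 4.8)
The plan is to exploit the independence of $\zeta$ from the pair $(B^{H,1},B^{H,2})$ to separate the two sources of randomness, and then to evaluate the resulting moments of the intersection local time $\alpha(t_1,t_2)$ by regularizing the Dirac delta. First I would write $\big(\sqrt{\alpha(t_1,t_2)}\,\zeta\big)^{p}=\alpha(t_1,t_2)^{p/2}\,\zeta^{p}$, which is legitimate because $\alpha(t_1,t_2)\ge 0$. Since $\zeta$ is independent of $\alpha(t_1,t_2)$,
\[
\E\big[\sqrt{\alpha(t_1,t_2)}\,\zeta\big]^{p}=\E\big[\alpha(t_1,t_2)^{p/2}\big]\,\E[\zeta^{p}].
\]
The standard normal moments give $\E[\zeta^{p}]=0$ when $p$ is odd and $\E[\zeta^{2m}]=(2m-1)!!$, so the odd case is immediate and for $p=2m$ it remains to identify $\E\big[\alpha(t_1,t_2)^{m}\big]$ with $(2\pi)^{-md/2}\int_{E^{m}}(\det A(u,v))^{-1/2}\,du\,dv$.

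For the even case I would regularize. Let $p_{\eps}$ be the $d$-dimensional heat kernel with variance $\eps$ and set $\alpha_{\eps}(t_1,t_2)=\int_{E}p_{\eps}\big(B^{H,1}_u-B^{H,2}_v\big)\,du\,dv$. Writing $p_{\eps}$ through its Fourier transform, raising to the $m$-th power, taking expectation and using the Gaussian characteristic function, the exponent that appears is the variance $\Var\big(\sum_{i}\xi_i\cdot(B^{H,1}_{u_i}-B^{H,2}_{v_i})\big)=\xi^{\top}A(u,v)\,\xi$, where $\xi=(\xi_1,\dots,\xi_m)\in\R^{md}$ and $A(u,v)$ is exactly the covariance matrix in the statement. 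Carrying out the resulting Gaussian integral in $\xi$ over $\R^{md}$ yields
\[
\E\big[\alpha_{\eps}(t_1,t_2)^{m}\big]=\frac{1}{(2\pi)^{md/2}}\int_{E^{m}}\big(\det(A(u,v)+\eps I)\big)^{-1/2}\,du\,dv.
\]
Letting $\eps\downarrow 0$, the integrand increases monotonically to $(\det A(u,v))^{-1/2}$, so monotone convergence on the right gives the claimed formula, provided the limiting integral is finite.

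The main obstacle is precisely this finiteness, namely $\int_{E^{m}}(\det A(u,v))^{-1/2}\,du\,dv<\infty$, which also furnishes the $L^{m}$ convergence $\alpha_{\eps}(t_1,t_2)\to\alpha(t_1,t_2)$ needed to conclude that $\lim_{\eps\to 0}\E[\alpha_{\eps}^{m}]=\E[\alpha^{m}]$ (the uniform bound on a higher moment supplies uniform integrability, and convergence in probability of the nonnegative $\alpha_{\eps}$ then upgrades to convergence of the $m$-th moments). I would obtain a lower bound on $\det A(u,v)$ from the local nondeterminism property in Lemma \ref{lnd} together with its rewritten form \eref{eq1}: after ordering the time coordinates of $u$ and of $v$, these inequalities bound the relevant conditional variances from below by products of successive increments $(s_i-s_{i-1})^{2H}$, reducing the finiteness to an elementary (though careful) integrability estimate on $E^{m}$ under the standing assumption $Hd<2$. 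Once this bound is in place, the identity for $\E\big[\alpha(t_1,t_2)^{m}\big]$ follows and the lemma is proved.
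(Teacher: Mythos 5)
Your proposal is correct and follows essentially the same route as the paper, whose proof is a one-line appeal to ``properties of the normal distribution and the intersection local time'': you factor $\E\big[\alpha^{p/2}\zeta^p\big]=\E\big[\alpha^{p/2}\big]\E[\zeta^p]$ by independence, use $\E[\zeta^{2m}]=(2m-1)!!$ and the vanishing of odd normal moments, and identify $\E[\alpha^m]$ with $(2\pi)^{-md/2}\int_{E^m}(\det A(u,v))^{-1/2}\,du\,dv$ via heat-kernel regularization, monotone convergence, and the local nondeterminism bound ensuring finiteness under $Hd<2$. This is precisely the standard argument (as in the cited work of Nualart and Ortiz-Latorre) that the paper's terse proof implicitly invokes, so your write-up simply supplies the details the authors omit.
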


\begin{proof} This follows easily from the properties the normal distribution and the intersection local time $\alpha(t_1,t_2)$.
\end{proof}

\medskip

\section{Proof of Theorem \ref{thm1}}

By the scaling property of $X(t_1,t_2)$ in \eref{scale}, we see that, as random variables,
\[
n^{\frac{Hd-2}{2}} \int^{nt_1}_0\int^{nt_2}_0 f(B^{H,1}_u-B^{H,2}_v)\, du\, dv\overset{\mathcal{L}}{=}n^{\frac{2+Hd}{2}}\int^{t_1}_0\int^{t_2}_0 f\big(n^H(B^{H,1}_u-B^{H,2}_v)\big)\, du\, dv.
\]
Therefore, it suffices to show Theorem \ref{thm1} for the random variable
\[
F_{n}(t_1, t_2)=n^{\frac{2+Hd}{2}}\int^{t_1}_0\int^{t_2}_0 f\big(n^H(B^{H,1}_u-B^{H,2}_v)\big)\, du\, dv.
\]
The proof of Theorem \ref{thm1} will be done in two steps. We first show tightness and then establish the convergence of moments. 

\subsection{Tightness}

Tightness will be deduced from the following result.
\begin{proposition} \label{tight}
For any integer $m\geq 1$, there exists a positive constant $C$ independent of $n$ such that 
\[
\E\big[F_{n}(t_1,t_2)\big]^{2m}\leq C\,
\Big[\int_{\R^{2d}}|f(x)f(y)| |y|^{\frac{2}{H}-d} \,
dx\, dy\Big]^{m}.
\]
\end{proposition}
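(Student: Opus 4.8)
The plan is to expand the $2m$-th moment, move to the Fourier side, and combine the local nondeterminism of Lemma~\ref{lnd} with the cancellation furnished by $\int_{\R^d}f=0$. Note first that the right-hand side factorizes as $\big[\,\|f\|_{L^1}\int_{\R^d}|f(y)||y|^{\beta}\,dy\,\big]^m$ with $\beta=\frac2H-d$, so each of the $2m$ factors $f(n^H\,\cdot\,)$ should ultimately be charged either an $\|f\|_{L^1}$ or a weighted $\int|f(y)||y|^{\beta}\,dy$. I would start from
\[
\E\big[F_n(t_1,t_2)\big]^{2m}=n^{(2+Hd)m}\int_{[0,t_1]^{2m}\times[0,t_2]^{2m}}\E\Big[\prod_{k=1}^{2m}f\big(n^H(B^{H,1}_{u_k}-B^{H,2}_{v_k})\big)\Big]\,du\,dv,
\]
write each $f$ through its Fourier transform $\hat f$, and evaluate the Gaussian expectation; by the independence of $B^{H,1}$ and $B^{H,2}$ the inner term equals
\[
\frac{1}{(2\pi)^{2md}}\int_{\R^{2md}}\prod_{k=1}^{2m}\hat f(\xi_k)\,\exp\Big(-\tfrac{n^{2H}}2(Q_1+Q_2)\Big)\,d\xi,\qquad Q_1=\Var\Big(\sum_k\xi_k\cdot B^{H,1}_{u_k}\Big),\ \ Q_2=\Var\Big(\sum_k\xi_k\cdot B^{H,2}_{v_k}\Big).
\]
Since $\int f=0$ forces $\hat f(0)=0$, I would use $\hat f(\xi)=\int f(y)(e^{-\iota\xi\cdot y}-1)\,dy$ together with $|e^{\iota\theta}-1|\le 2|\theta|^{\beta}$ (valid because $\beta\in(0,1)$) to record the two estimates $|\hat f(\xi)|\le\|f\|_{L^1}$ and $|\hat f(\xi)|\le 2|\xi|^{\beta}\int|f(y)||y|^{\beta}\,dy$; these are the only properties of $f$ that enter.

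Next I would expose the pair structure through the substitution $w_{2k-1}=n(u_{2k}-u_{2k-1})$, $w_{2k}=u_{2k}$, $s_{2k-1}=n(v_{2k}-v_{2k-1})$, $s_{2k}=v_{2k}$, whose Jacobian contributes $n^{-2m}$; by the permutation symmetry of the integrand it suffices, up to a constant depending only on $m$, to work on one ordering of the $2m$ index-pairs. After the substitution each pair carries a \emph{fast} increment of order $1/n$ and a \emph{slow} position, and applying the lower bound \eref{eq1} to $Q_1$ and to $Q_2$ estimates $Q_1+Q_2$ from below by a sum over pairs of (slow variance)$\,\times\,|\text{cumulative frequency}|^2$ plus (fast variance)$\,\times\,|\text{within-pair frequency}|^2$. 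Passing to the cumulative frequencies $\eta_k$ and the within-pair differences diagonalizes the Gaussian weight into a slow factor $\exp(-c\,n^{2H}\sum_k|\eta_k|^2 a_k)$, with $a_k$ the slow-position variance, and a fast factor $\exp(-c\sum_k|\xi^{(k)}|^2(w_{2k-1}^{2H}+s_{2k-1}^{2H}))$.

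I would then integrate out the $\eta_k$'s, which yields $\prod_k(n^{2H}a_k)^{-d/2}=n^{-Hdm}\prod_k a_k^{-d/2}$ and exactly cancels the remaining power of $n$, since $n^{(2+Hd)m}n^{-2m}n^{-Hdm}=1$. Integrating the slow positions against $\prod_k a_k^{-d/2}$ gives a finite multiple of the integral $\int_{E^m}(\det A)^{-1/2}\,du\,dv$ of Lemma~\ref{lema2.1}, convergent precisely because $Hd<2$; this is the local-time factor carried by $\sqrt{\alpha(t_1,t_2)}$ in the limit. For each fast pair the elementary identity $\int_0^\infty\int_0^\infty e^{-c(w^{2H}+s^{2H})|\xi|^2}\,dw\,ds=C|\xi|^{-2/H}$ reduces the remaining frequency integral to $\int_{\R^d}|\hat f(\xi)|^2|\xi|^{-2/H}\,d\xi$, and splitting $|\xi|$ at the crossover $|\xi|\sim(\|f\|_{L^1}/\int|f||y|^{\beta})^{1/\beta}$ and using the weighted bound on $\hat f$ for small $\xi$ and the $L^1$ bound for large $\xi$ gives $\int|\hat f(\xi)|^2|\xi|^{-2/H}\,d\xi\le C\,\|f\|_{L^1}\int_{\R^d}|f(y)||y|^{\beta}\,dy$. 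Multiplying the $m$ identical pair-contributions produces the asserted bound.

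The delicate point is making the second and third steps rigorous \emph{uniformly in} $n$. The two fields must be ordered by the $u$'s and by the $v$'s separately, and these orderings need not coincide, so the joint pairing built into the change of variables has to be reconciled with the two different local-nondeterminism decompositions, and one must verify that every resulting space--time and frequency integral converges before any limit is taken. This convergence is not automatic: with the bound $|\hat f|\le\|f\|_{L^1}$ alone the per-pair frequency integral $\int|\hat f(\xi)|^2|\xi|^{-2/H}\,d\xi$ diverges near $\xi=0$, and it is exactly the gain $|\hat f(\xi)|\le C|\xi|^{\beta}\int|f||y|^{\beta}$ coming from $\int f=0$ that pushes the exponent into the integrable range. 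This is where the hypotheses enter: $Hd<2$ controls the slow local-time integral, while $\beta=\frac2H-d<1$, i.e.\ $H>\frac2{d+1}$, is what makes $|e^{\iota\theta}-1|\le 2|\theta|^{\beta}$ available and keeps the fast integral finite.
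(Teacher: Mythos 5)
Your overall architecture (Fourier representation, local nondeterminism, pairing substitution, charging each pair one $\|f\|_{L^1}$ and one $\int_{\R^d}|f(y)||y|^{\frac 2H-d}\,dy$) matches the paper's, and your power counting in $n$ is correct. But the step you yourself flag as ``the delicate point'' is precisely the step you never supply, and it is not a routine verification: it is the one essential device in the paper's proof. By permutation symmetry you may order \emph{one} family of time variables, say $u_1<\cdots<u_{2m}$, but the $v$'s then sit in an arbitrary relative order, so the lower bound of Lemma \ref{lnd} applied to $Q_2$ produces cumulative frequencies $\sum_{j\ge i}\xi_{\rho(j)}$ taken in the $v$-ordering $\rho$, which is a \emph{different} linear change of variables from the one that diagonalizes the bound for $Q_1$. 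Your claimed splitting into a slow factor and a fast factor per pair therefore fails unless $\rho$ respects the pairing; indeed the $n$-scaled substitution $s_{2k-1}=n(v_{2k}-v_{2k-1})$ already presumes that the indices $2k-1,2k$ are adjacent in the $v$-ordering as well. The paper resolves this by never ordering the two families against each other: independence of $B^{H,1}$ and $B^{H,2}$ factorizes the Gaussian weight, both blocks are bounded by the same quantity $I_t(\xi)=\sum_{\sigma}I_t^{\sigma}(\xi)$, and then $\big(\sum_{\sigma}I_t^{\sigma}\big)^2\le (2m)!\sum_{\sigma}\big(I_t^{\sigma}\big)^2$, so that in each summand \emph{both} blocks carry a common ordering $\sigma$; a single relabeling reduces to $w_1<\cdots<w_{2m}$, $s_1<\cdots<s_{2m}$, and one simultaneous increment/cumulative-frequency substitution then diagonalizes both variance bounds at once. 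Without this (or an equivalent device) your argument does not go through.

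A second, lesser gap: after your substitution the factors $\hat f(\xi_k)$ still depend on the slow frequencies, so you cannot simply ``integrate out the $\eta_k$'s'' to produce $\prod_k(n^{2H}a_k)^{-d/2}$; your per-pair integral $\int_{\R^d}|\hat f(\xi)|^2|\xi|^{-2/H}\,d\xi$ tacitly sets the pair-total frequency to zero inside $\hat f$, which is exactly what needs justification. The paper decouples \emph{before} integrating: it bounds the oscillation factor $\big|e^{\iota z_i\cdot\xi_i/n^H}-1\big|$ by $2$ for all even $i$, so the surviving factors involve only the disjoint pairs $(\eta_{2k-1},\eta_{2k})$, realized as independent Gaussian vectors, and the expectation factorizes exactly into $m$ identical terms; each term is then controlled by Lemmas \ref{lema2} and \ref{lema3}, which is where $Hd<2$ and $Hd>2-H$ enter and which play the role of your crossover split. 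Your pointwise bounds on $\hat f$ and the crossover computation are a legitimate substitute for Lemma \ref{lema3} at the level of a single pair, but they only become applicable once the common-ordering and decoupling steps above are in place.
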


\begin{proof}  Note that
\begin{equation}
\E\big[F_{n}(t_1,t_2)\big]^{2m} =  n^{m(2+Hd)} \E \Big[\int_{E^{2m}}\prod_{i=1}^{2m} f\big(n^H (B^{H,1}_{u_i}-B^{H,2}_{v_i})\big)\, du\, dv\Big], \label{t1.e.1}
\end{equation}
where $E=[0,t_1]\times[0,t_2]$.  

Using Fourier analysis and making proper change of variables, 
\begin{align}
&  (2\pi n^H)^{2md}\E \Big[\int_{E^{2m}}\prod_{i=1}^{2m} f\big(n^{H} (B^{H,1}_{u_i}-B^{H,2}_{v_i})\big)\, du\, dv \Big] \notag \\
 =  &\, \int_{\R^{4md}} \int_{E^{2m}}\prod_{i=1}^{2m} f(z_i)
                \exp\Big\{-\frac{1}{2}\Var \Big(\sum^{2m}_{i=1}\xi_i \cdot \big(B^{H,1}_{u_i}-B^{H,2}_{v_i}\big)\Big) - \iota\sum^{2m}_{i=1}\frac{z_i\cdot \xi_i}{n^H}\Big\}\, du\, dv\, d\xi\, dz\notag\\
 =  &\,   \int_{\R^{4md}} \int_{E^{2m}}\prod_{i=1}^{2m} f(z_i)  \prod^{2m}_{i=1} \Big(e^{-\iota \frac{z_i\cdot \xi_i}{n^H}}-1\Big)  \notag \\
             &  \qquad  \times   \exp\Big\{-\frac{1}{2}\Var\Big(\sum^{2m}_{i=1}\xi_i \cdot B^{H}_{u_i}\Big)-\frac{1}{2}\Var\Big(\sum^{2m}_{i=1}\xi_i \cdot B^{H}_{v_i}\Big)\Big\} \, du\, dv\, d\xi\, dz,   \label{t1.e.2}
      \end{align}
where  in the last equality we used the fact that $\int_{\R^d}
f(x)\, dx=0$.

Let $t=\max\{t_1,t_2\}$ and $\mathscr{P}$ be the set consisting of all permutations of $\{1,2,\dots, 2m\}$.  Set
 \[
I_{t}(\xi)= \int_{ [0,t]^{2m}} \exp\Big\{ -\frac 12 \mathrm{Var} \big( \sum_{j=1}^{2m} \xi_j \cdot B^H_{u_j}  \big)\Big\}\, du.
 \]
For any $\sigma\in\mathscr{P}$, define
\[
I_{t}^\sigma (\xi)= \int_{ D_\sigma } \exp \Big\{-\frac 12 \mathrm{Var} \big( \sum_{j=1}^{2m} \xi_j \cdot B^H_{u_j}  \big)\Big\}\, du,
\]
where $D_\sigma=\{ u\in [0,t] ^{2m}: u_{\sigma(1)} <\dots < u_{\sigma(2m)} \}$. 

Therefore, $I_{t}(\xi)$ can be decomposed as
\begin{equation}
I_{t}(\xi)= \sum_{\sigma\in \mathscr{P}}  I_{t}^{\sigma}(\xi).   \label{t1.e.3}
\end{equation}
For simplicity of notation, set
\begin{equation} \label{Phin}
\Phi_n(\xi, z)= n^{m(2-Hd)} \prod_{i=1}^{2m} \big|f(z_i) \big|
\prod_{i=1}^{2m}  \big| e^{i \frac {z_i\cdot \xi_i}{n^H}} -1\big|.
\end{equation}

From \eref{t1.e.1}, \eref{t1.e.2} and \eref{t1.e.3},  we can write
\begin{align}
\E\big[F_{n}(t_1,t_2)\big]^{2m}
&\leq  c_1 \int_{\R^{4md}} \Phi_n(\xi, z) \big(I_{t}(\xi)\big)^2 d\xi\, dz \leq  c_2 \sum_{\sigma\in\mathscr{P}} \int_{\mathbb{R}^{4md}}  \Phi_n(\xi,z) \big(I_{t}^\sigma(\xi)\big)^2\, d\xi\, dz. \label{t1.e.4}
\end{align}
Observe that
\begin{align*}
\big(I_{t}^\sigma(\xi) \big)^2
&= \int_{ D_\sigma\times D_{\sigma}} \exp \bigg\{  -\frac 12 \mathrm{Var} \Big( \sum_{j=1}^{2m} \xi_j \cdot B^H_{u_j} \Big)-\frac 12 \mathrm{Var} \Big( \sum_{j=1}^{2m} \xi_j \cdot B^H_{v_j}  \Big)\bigg\}\, du\, dv \notag \\
&= \int_{ \widehat{D}} \exp \bigg\{ -\frac 12 \mathrm{Var} \Big( \sum_{j=1}^{2m} \xi_j \cdot B^H_{w_j} \Big)-\frac 12 \mathrm{Var} \Big( \sum_{j=1}^{2m} \xi_j \cdot B^H_{s_j}  \Big)\bigg\}\, dw\, ds,
\end{align*}
where $\widehat{D}=\big\{w,s\in [0,t] ^{2m}: w_1 <\dots < w_{2m}\; \text{and}\; s_1 <\dots <s_{2m}\big\}$.

Using the second inequality in \eref{eq1}, we obtain that 
\[
\int_{\mathbb{R}^{4md}}  \Phi_n(\xi,z) \big(I_{t}^\sigma(\xi) \big)^2\, d\xi\, dz
\] 
is less than or equal to
\[
\int_{\mathbb{R}^{4md}} \int_{ \widehat{D}} \Phi_n(\xi,z) \exp\Big\{-\frac{\kappa_H}{2}\sum^{2m}_{i=1} |\sum^{2m}_{j=i}\xi_j|^2\big[(w_i-w_{i-1})^{2H}+(s_i-s_{i-1})^{2H}\big]\Big\}\, dw\, ds\, d\xi \, dz,
\]
with the convention $w_0=s_0=0$.

Making the change of variables $\eta_i=\sum\limits^{2m}_{j=i}\xi_j$, $u_i=w_i-w_{i-1}$ and $v_i=s_i-s_{i-1}$ for $i= 1,\dots, 2m$ gives
\begin{align}
&\int_{\mathbb{R}^{4md}}  \Phi_n(\xi,z) (I_{t}^\sigma(\xi) )^2\, d\xi\, dz \notag\\
 &\quad \leq n^{m(2-Hd)} \int_{\R^{4md}} \int_{[0,t]^{4m}}\big|\prod_{i=1}^{2m} f(z_i)\big| \prod^{2m}_{i=1}\big|\exp\big(\iota \frac{z_i}{n^H}\cdot (\eta_{i+1}-\eta_i)\big)-1\big| \notag\\
         &\quad\quad\quad \times  \exp\Big\{-\frac{\kappa_H}{2}\sum^{2m}_{i=1} |\eta_i|^2(u_i^{2H}+v_i^{2H})\Big\}\, d\eta\, du\, dv\, dz, \label{t.e.4}
\end{align}
with the convention $\eta_{2m+1}=0$.

Let $\sqrt{\kappa_H}X_1, \dots, \sqrt{\kappa_H}X_{2m}$ be independent
copies of the $d$-dimensional standard normal random vector and $X_{2m+1}=0$. Then inequality \eref{t.e.4} can be written as 
\begin{align} \label{t.e.4.1}
&\int_{\mathbb{R}^{4md}}  \Phi_n(\xi,z) (I_{t}^\sigma(\xi) )^2\, d\xi\, dz \notag\\
& \leq    c_3\, n^{m(2-Hd)} \E\bigg[\int_{\R^{2md}} \int_{[0,t]^{4m}}\prod_{i=1}^{2m} \big|f(z_i)\big| \prod_{i=1}^{2m}(u^{2H}_i+v^{2H}_{i})^{-\frac{d}{2}}  \notag \\
          &\qquad \times \prod^{2m}_{i=1}\Big|\exp\Big(\iota \frac{z_i}{n^H}\cdot \big(\frac{X_{i+1}}{\sqrt{u^{2H}_{i+1}+v^{2H}_{i+1}}}-\frac{X_i}{\sqrt{u^{2H}_i+v^{2H}_{i}}}\big)\Big)-1\Big|  \, du\, dv\, dz\bigg]. 
\end{align}   

To make use of the independence of $X_1, X_2,\dots, X_{2m}$, we replace the terms
\[ 
\Big|\exp\Big(\iota \frac{z_i}{n^H}\cdot \big(\frac{X_{i+1}}{\sqrt{u^{2H}_{i+1}+v^{2H}_{i+1}}}-\frac{X_i}{\sqrt{u^{2H}_i+v^{2H}_{i}}}\big)\Big)-1\Big|, \quad i=2,4,\dots,2m
\]
on the right hand side of inequality \eref{t.e.4.1} with $2$ and then obtain
\begin{align}
&\int_{\mathbb{R}^{4md}}  \Phi_n(\xi,z) (I_{t}^\sigma(\xi) )^2\, d\xi\, dz \notag\\
& \leq    c_4\, n^{m(2-Hd)} \E\bigg[\int_{\R^{2md}} \int_{[0,t]^{4m}}\prod_{i=1}^{2m} \big|f(z_i)\big| \prod_{i=1}^{2m}(u^{2H}_i+v^{2H}_{i})^{-\frac{d}{2}}  \notag \\
          &\qquad \times \prod^{m}_{k=1}\Big|\exp\Big(\iota \frac{z_i}{n^H}\cdot \big(\frac{X_{2k}}{\sqrt{u^{2H}_{2k}+v^{2H}_{2k}}}-\frac{X_{2k-1}}{\sqrt{u^{2H}_{2k-1}+v^{2H}_{2k-1}}}\big)\Big)-1\Big|  \, du\, dv\, dz\bigg]  \notag\\        
&     =   c_4 \bigg(n^{2-Hd}\int_{\R^{2d}} \int_{[0,t]^4}\prod_{i=1}^{2} \big|f(z_i)\big| \prod_{i=1}^{2}\big(\sqrt{u^{2H}_i+v^{2H}_{i}}\big)^{-d}  \notag\\
          &\qquad \times \E\Big[\Big|\exp\Big(\iota \frac{z_1}{n^H}\cdot \big(\frac{X_2}{\sqrt{u^{2H}_2+v^{2H}_2}}-\frac{X_1}{\sqrt{u^{2H}_1+v^{2H}_1}}\big)\Big)-1\big|\Big]  \, du_1\, du_2\, dv_1\,
          dv_2\, dz_1\, dz_2\bigg)^{m}  \notag\\
& \leq    c_5\,  t^{2-Hd} \Big(\int_{\R^{2d}}|f(z_1)f(z_2)|z_1|^{\frac{2}{H}-d}\, dz_1\, dz_2\Big)^m, \label{t.e.5}
\end{align}
where in the last inequality we used Lemmas \ref{lema2} and \ref{lema3}.

Combining \eref{t.e.5} and \eref{t1.e.4} gives the desired inequality. \end{proof}

\medskip

\subsection{Convergence of odd moments}

Let $t=\max\{t_1,t_2\}$. For any $p\in\N$, $y\in\R^d$ and $\xi=(\xi_1,\dots,\xi_p)\in(\R^d)^p$, define
\begin{equation} \label{Phi}
\Phi_{n,p}(\xi,y)=n^{\frac{p(2-Hd)}{2}}\prod^{p}_{j=1} \big|f(y_j)   (e^{ -\iota \frac{y_j\cdot \xi_j}{n^H}}-1)\big|
\end{equation}
and
\[
H_{n,p}=\int_{\R^{2pd}}\int_{[0,t]^{2p}} \Phi_{n,p}(\xi,y)\exp\bigg\{ -\frac{1}{2}\Var\Big( \sum^{p}_{j=1} \xi_j\cdot  \big(B^{H,1}_{u_j} - B^{H,2}_{v_j} \big)\Big)\bigg\}\,  du\, dv\, d\xi\, dy.
\]
Note that for $p=2m$, $\Phi_{n,p}(\xi,y)$ is precisely the function defined \eref{Phin}. Also in the proof of Proposition \ref{tight}, we have that $\E\big[F_{n}(t_1,t_2)\big]^{2m}\leq c\, H_{n,2m}$. We are going to use see that if $p$ is odd, then $H_{n,p}$ converges to zero as $n$ tends to infinity. This will imply the convergence of odd moments.

\begin{proposition} \label{odd} If $p$ is odd, then
\[
\lim\limits_{n\to\infty}H_{n,p}=0.
\]
\end{proposition}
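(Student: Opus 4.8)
The plan is to follow the reduction carried out in the proof of Proposition \ref{tight} up to the Gaussian decoupling, and then to exploit the fact that an \emph{odd} number of oscillatory factors cannot be completely paired, so that one factor is left over and produces a strictly negative power of $n$.

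First, by the independence of $B^{H,1}$ and $B^{H,2}$ the variance in $H_{n,p}$ splits, and since $\Phi_{n,p}$ does not depend on $u,v$ one may integrate the two groups of time variables separately to obtain $H_{n,p}=\int_{\R^{2pd}}\Phi_{n,p}(\xi,y)\big(\tilde I_t(\xi)\big)^2\,d\xi\,dy$, where $\tilde I_t(\xi)=\int_{[0,t]^p}\exp\{-\frac12\Var(\sum_j\xi_j\cdot B^H_{u_j})\}\,du$. Decomposing $\tilde I_t=\sum_{\sigma\in\mathscr P}\tilde I_t^\sigma$ over the simplices $D_\sigma$ and using $(\sum_\sigma a_\sigma)^2\le p!\sum_\sigma a_\sigma^2$ forces the \emph{same} ordering on both copies, exactly as in the tightness argument. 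Applying the local nondeterminism lower bound from \eref{eq1}, the change of variables $\eta_i=\sum_{j\ge i}\xi_j$ together with $u_i,v_i\mapsto$ increments, and the Gaussian representation, I would bound each summand by a constant times
\[
n^{\frac{p(2-Hd)}{2}}\,\E\Big[\int_{\R^{pd}}\int_{[0,t]^{2p}}\prod_{i=1}^p|f(z_i)|\,(u_i^{2H}+v_i^{2H})^{-\frac d2}\prod_{i=1}^p\Big|e^{\iota\frac{z_i}{n^H}\cdot\big(\frac{X_{i+1}}{\sqrt{u_{i+1}^{2H}+v_{i+1}^{2H}}}-\frac{X_i}{\sqrt{u_i^{2H}+v_i^{2H}}}\big)}-1\Big|\,du\,dv\,dz\Big],
\]
where $X_1,\dots,X_p$ are independent (up to the factor $\sqrt{\kappa_H}$) standard Gaussians and $X_{p+1}=0$.

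Writing $p=2m+1$, I would then bound the even-indexed factors $i=2,4,\dots,2m$ by $2$ and retain the odd-indexed ones $i=1,3,\dots,2m+1$. The factor $i=2k-1$ depends only on $\{X_{2k-1},X_{2k}\}$ for $1\le k\le m$, while the last retained factor $i=2m+1$ depends only on $X_{2m+1}$ (since $X_{2m+2}=0$). As these variable groups are disjoint, independence makes the expectation factorize into $m$ identical ``pair'' integrals and one ``singleton'' integral, and the power splits as $n^{\frac{p(2-Hd)}{2}}=n^{m(2-Hd)}\cdot n^{\frac{2-Hd}{2}}$. Each pair integral is precisely the quantity bounded by a finite constant in the proof of Proposition \ref{tight} through Lemmas \ref{lema2} and \ref{lema3}, so the whole estimate reduces to controlling
\[
J_n=n^{\frac{2-Hd}{2}}\int_{\R^d}\int_{[0,t]^2}|f(z)|\,(u^{2H}+v^{2H})^{-\frac d2}\,\E\Big|e^{-\iota\frac{z}{n^H}\cdot\frac{X}{\sqrt{u^{2H}+v^{2H}}}}-1\Big|\,du\,dv\,dz.
\]

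The crux, and the only genuinely new point relative to the even case, is to show $J_n\to0$. I would fix an exponent $\gamma$ in the interval $\big(\frac1H-\frac d2,\ \min\{1,\tfrac2H-d\}\big)$, which is nonempty precisely because $\frac2{d+1}<H<\frac2d$: indeed $H<\frac2d$ gives $\frac1H-\frac d2<\frac2H-d$, and $H>\frac2{d+1}>\frac2{d+2}$ gives $\frac1H-\frac d2<1$. Using the elementary bound $|e^{\iota\theta}-1|\le 2^{1-\gamma}|\theta|^\gamma$ yields $J_n\le c\,n^{\frac{2-Hd}{2}-H\gamma}\big(\int_{\R^d}|f(z)|\,|z|^\gamma\,dz\big)\big(\int_{[0,t]^2}(u^{2H}+v^{2H})^{-\frac{d+\gamma}2}\,du\,dv\big)$. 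The time integral converges because $(d+\gamma)H<2$ (from $\gamma<\frac2H-d$); the spatial integral is finite since $f\in L^1$ and $\int|f(z)||z|^{\frac2H-d}\,dz<\infty$, so $|z|^\gamma\le 1+|z|^{\frac2H-d}$ gives integrability; and the exponent $\frac{2-Hd}{2}-H\gamma$ is strictly negative because $\gamma>\frac1H-\frac d2$. Hence $J_n\to0$, and since there are only finitely many permutations $\sigma$ while each pair factor is uniformly bounded, this forces $H_{n,p}\to0$. The main obstacle is this singleton estimate, which is exactly where the hypothesis $\frac2{d+1}<H<\frac2d$ is needed to keep the admissible window for $\gamma$ nonempty.
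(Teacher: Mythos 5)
Your proposal is correct, and its skeleton is exactly the paper's: splitting the variance by independence of $B^{H,1}$ and $B^{H,2}$, decomposing over permutations and forcing a common ordering via Cauchy--Schwarz, applying the local nondeterminism bound \eref{eq1}, the change of variables to increments and $\eta_i=\sum_{j\ge i}\xi_j$, the Gaussian representation, discarding the even-indexed oscillatory factors, and factorizing by independence into $\frac{p-1}{2}$ pair integrals (each uniformly bounded via Lemmas \ref{lema2} and \ref{lema3}, as in Proposition \ref{tight}) plus one leftover singleton carrying the factor $n^{\frac{2-Hd}{2}}$. The one place you genuinely diverge is the singleton estimate, which you correctly identify as the crux. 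The paper disposes of it in one line by applying Lemma \ref{lema3} once more, with $y=y_p$ and $X=X_p$: the $du_p\,dv_p$-integral is bounded by $c\,n^{Hd-2}|y_p|^{\frac 2H-d}$, giving the sharp overall rate $H_{n,p}\le c\,n^{\frac{Hd-2}{2}}$. You instead use the elementary inequality $|e^{\iota\theta}-1|\le c\,|\theta|^{\gamma}$ with $\gamma\in\big(\frac 1H-\frac d2,\ \min\{1,\frac 2H-d\}\big)$, and your verification of this window is sound: nonemptiness needs only $Hd<2$ and $H>\frac{2}{d+2}$; the time integral over the bounded domain $[0,t]^2$ converges because $H(d+\gamma)<2$; the spatial integral is finite by $|z|^{\gamma}\le 1+|z|^{\frac 2H-d}$; and the exponent $\frac{2-Hd}{2}-H\gamma$ is strictly negative. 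What the paper's route buys is brevity and the sharp rate, at no extra cost since Lemma \ref{lema3} is already in hand for the pairs; what your route buys is that the leftover factor is handled with no scaling computation, under the weaker condition $H>\frac{2}{d+2}$ (Lemma \ref{lema3} itself requires $H>\frac{2}{d+1}$) --- though this gains nothing globally, since your pair estimates still invoke Lemma \ref{lema3}. Both arguments are complete proofs of the proposition.
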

\begin{proof} Using similar notation as in the proof of Proposition \ref{tight}, we have
\begin{align*}
H_{n,p}
&\leq c_1\,  n^{\frac{p(2-Hd)} 2}\, \E\bigg[\int_{\R^{pd}} \int_{[0,t]^{2p}}\prod_{i=1}^{p} \big|f(y_i)\big| \prod_{i=1}^{p}(u^{2H}_i+v^{2H}_{i})^{-\frac{d}{2}}  \notag \\
&\qquad \times \prod^{p}_{i=1}\Big|\exp\Big(\iota \frac{y_i}{n^H}\cdot \big(\frac{X_{i+1}}{\sqrt{u^{2H}_{i+1}+v^{2H}_{i+1}}}-\frac{X_i}{\sqrt{u^{2H}_i+v^{2H}_{i}}}\big)\Big)-1\Big|  \, du\, dv\, dy\bigg],
\end{align*}
with the convention $X_{p+1}=0$. Since $X_1, X_2,\cdots, X_p$ are i.i.d,
\begin{align*} 
H_{n,p}
&\leq c_2\, n^{\frac{p(2-Hd)} 2}\, \E\bigg[\int_{\R^{pd}} \int_{[0,t]^{2p}}\prod_{i=1}^{p} \big|f(y_i)\big| \prod_{i=1}^{p}(u^{2H}_i+v^{2H}_{i})^{-\frac{d}{2}}  \notag \\
          &\qquad \times \prod^{\frac{p+1}{2}}_{k=1}\Big|\exp\Big(\iota \frac{y_{2k-1}}{n^H}\cdot \big(\frac{X_{2k}}{\sqrt{u^{2H}_{2k}+v^{2H}_{2k}}}-\frac{X_{2k-1}}{\sqrt{u^{2H}_{2k-1}+v^{2H}_{2k-1}}}\big)\Big)-1\Big|  \, ds\, dt\, dy\bigg]\\            
&\leq c_3\, n^{\frac{(2-Hd)} 2} \Big(\int_{\R^{2d}}|f(y_1)f(y_2)|y_1|^{\frac{2}{H}-d}\, dy_1\, dy_2\Big)^\frac{p-1}{2} \notag \\
          &\qquad \times  \E\bigg[\int_{\R^{d}} \int_{[0,t]^{2}} |f(y_p)| (u^{2H}_p+v^{2H}_p)^{-\frac{d}{2}}\Big|\exp\big(-\iota \frac{y_p}{n^H}\cdot \frac{X_{p}}{\sqrt{u^{2H}_{p}+v^{2H}_{p}}}\big)-1\Big|  \, du_p\, dv_p\, dy_p\bigg]\\ 
&\leq c_4\, n^{\frac{(Hd-2)} 2} \Big(\int_{\R^{2d}}|f(y_1)f(y_2)|y_1|^{\frac{2}{H}-d}\, dy_1\, dy_2\Big)^\frac{p-1}{2} \Big(\int_{\R^{d}}|f(y_p)||y_p|^{\frac{2}{H}-d}\, dy_p\Big),                
\end{align*}
where in the last two inequalities we used Lemmas \ref{lema2} and \ref{lema3}.   Therefore, $\lim\limits_{n\to\infty}H_{n,p}=0$.
\end{proof}

Propositions \ref{tight} and \ref{odd} show that $H_{n,p}$ is uniformly bounded in $n$. Moreover, Proposition \ref{odd} implies the following convergence of odd moments.

\begin{proposition}\label{odd1} Suppose $p$ is odd, then 
\[
\lim\limits_{n\to\infty}\E[F_n(t_1,t_2)]^{p}=0.
\]
\end{proposition}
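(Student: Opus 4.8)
The plan is to derive the statement directly from Proposition \ref{odd} by establishing, for every $p\in\N$, the bound
\[
\big|\E[F_n(t_1,t_2)]^{p}\big|\le c\,H_{n,p},
\]
with a constant $c$ independent of $n$. For even $p=2m$ the quantity $\E[F_n]^{2m}$ is automatically nonnegative, and the bound is exactly the one recorded after the definition of $H_{n,p}$; the only new point for odd $p$ is that $F_n^{p}$ may change sign, so the modulus must be carried on the outside.

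First I would write, for any $p$,
\[
\E[F_n(t_1,t_2)]^{p}=n^{\frac{p(2+Hd)}{2}}\,\E\Big[\int_{E^{p}}\prod_{i=1}^{p}f\big(n^{H}(B^{H,1}_{u_i}-B^{H,2}_{v_i})\big)\,du\,dv\Big],
\]
the interchange of expectation and integration being justified by the absolute integrability already guaranteed by Proposition \ref{tight}. I would then repeat the Fourier computation leading to \eref{t1.e.2}, now with $2m$ replaced by $p$: represent each factor $f(n^H\cdot)$ through its Fourier transform, evaluate the resulting Gaussian characteristic function, and use the independence of $B^{H,1}$ and $B^{H,2}$ to split the variance into $\Var(\sum_i\xi_i\cdot B^H_{u_i})+\Var(\sum_i\xi_i\cdot B^H_{v_i})$. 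The hypothesis $\int_{\R^d}f(x)\,dx=0$ then permits replacing each exponential $e^{-\iota z_i\cdot\xi_i/n^H}$ by $e^{-\iota z_i\cdot\xi_i/n^H}-1$, which produces an exact complex integral representation of $\E[F_n]^{p}$ whose integrand coincides, up to the absolute values, with the one defining $H_{n,p}$ through $\Phi_{n,p}$ in \eref{Phi}.

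Taking the modulus and moving the absolute value inside the integral by the triangle inequality then gives $|\E[F_n(t_1,t_2)]^{p}|\le c\,H_{n,p}$. Since Proposition \ref{odd} asserts that $H_{n,p}\to 0$ as $n\to\infty$ when $p$ is odd, the claimed limit $\E[F_n(t_1,t_2)]^{p}\to 0$ follows at once.

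The main obstacle, modest as it is, lies in the passage from the exact integral identity to the inequality: one must verify that the representation converges absolutely so that the modulus may legitimately be brought inside the integral. This absolute convergence is precisely the finiteness of $H_{n,p}$, which is already contained in the estimates proving Propositions \ref{tight} and \ref{odd}. Consequently, once the identity $|\E[F_n]^{p}|\le c\,H_{n,p}$ is secured, the proposition is immediate.
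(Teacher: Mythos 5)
Your proposal is correct and takes essentially the same route as the paper: the paper's own proof consists precisely of the observation that $\big|\E[F_n(t_1,t_2)]^{p}\big|\leq H_{n,p}$ (which comes from the same Fourier representation and triangle-inequality step used in Proposition \ref{tight}) followed by an appeal to Proposition \ref{odd}. The only difference is that you spell out the derivation of this inequality in detail, whereas the paper states it without proof.
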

\begin{proof}  Since $|\E(F_n(t_1,t_2))^p|\leq H_{n,p}$ for all $p$, this follows immediately from Proposition \ref{odd}.
\end{proof}

\subsection{Some technical lemmas}

To prove the convergence of even moments, we need some technical lemmas. 

Recall that 
\[
 \E\big[F_{n}(t_1,t_2)\big]^{2m} =n^{m(2+Hd)} \E \Big[
           \int_{E^{2m}} \prod_{j=1}^{2m}f\big(n^{H}B^{H,1}_{u_j}-n^{H}B^{H,2}_{v_j}\big)\, du\, dv \Big],
 \]
 where $E=[0,t_1]\times[0,t_2]$.  
 
Let $\mathscr{P}$ be the set consisting of all permutations of $I=\{1,2,\dots, 2m\}$ and
\begin{equation} \label{domain}
D=\Big\{0=u_0<u_{1}<u_{2}<\cdots<u_{2m}<t_1,\, 0=v_0<v_{1}<v_{2}<\cdots<v_{2m}<t_2\Big\}.
\end{equation}
Then
\begin{equation} \label{moment}
 \E\big[F_{n}(t_1,t_2)\big]^{2m} =(2m)!\, n^{m(2+Hd)}  \sum_{\sigma\in\mathscr{P}}\E \Big[
           \int_{D} \prod_{j=1}^{2m}f\big(n^{H}B^{H,1}_{u_j}-n^{H}B^{H,2}_{v_{\sigma(j)}}\big)\, du\, dv \Big]. 
\end{equation}

For any $\epsilon>0$, define  
\[
H^{\sigma}_{n,2m,\epsilon}=\int_{\R^{4md}}\int_{R_{\epsilon}} \Phi_{n,2m}(\xi,y)\exp\Big\{ -\frac{1}{2}\Var\Big( \sum^{2m}_{j=1} \xi_j\cdot  \big(B^{H,1}_{u_j} - B^{H,2}_{v_{\sigma(j)}} \big)\Big)\Big\}\,  du\, dv\, d\xi\, dy,
\]
where
\begin{align*}
R_{\epsilon}&=
\Big\{0<u_{1}<u_{2}<\cdots<u_{2m}<t,\, 0<v_{1}<v_{2}<\cdots<v_{2m}<t\Big\}\\
&\qquad \bigcap \Big\{|u_{2j}-u_{j-2}|<\epsilon\; \text{or}\; |v_{2j}-v_{2j-2}|<\epsilon,\; \text{for some}\; j\in \{1,2,\dots, m\} \Big\}
\end{align*}
with the convention $u_0=v_0=0$ and $t=\max\{t_1,t_2\}$.

\begin{lemma}\label{epsilon}  For any $\sigma\in\mathscr{P}$,
\[
\lim\limits_{\epsilon\to 0}\sup_{n}H^{\sigma}_{n,p,\epsilon}=0.
\]
\end{lemma}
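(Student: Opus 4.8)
The plan is to treat $H^{\sigma}_{n,2m,\epsilon}$ by the \emph{same} chain of reductions used in the proof of Proposition \ref{tight}, the only difference being that the time variables are integrated over the sub-domain $R_\epsilon$ rather than over the full ordered simplex. First I would pass to the Fourier representation, apply the local nondeterminism bound \eref{eq1} to both the $B^{H,1}$ and the $B^{H,2}$ factors (reindexing the second one through $\sigma$ so that its times are increasing), and perform the change of variables $\eta_i=\sum_{j\geq i}\xi_j$ together with the passage to increments $w_i-w_{i-1}$ and $s_i-s_{i-1}$. Introducing the independent normals $X_1,\dots,X_{2m}$ and replacing the alternate oscillation factors by $2$ exactly as in \eref{t.e.4.1}--\eref{t.e.5} reduces $H^{\sigma}_{n,2m,\epsilon}$ to a product of $m$ one-pair integrals. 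The crucial new feature is that the pair whose increments are pinned by $R_\epsilon$ inherits an extra restriction: since $u_{2j}-u_{2j-2}$ is the sum of the two consecutive time-increments of the $j$-th pair, the constraint becomes (pair increment $1$)$+$(pair increment $2$)$<\epsilon$, and similarly for the $v$-variant.

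Since $R_\epsilon$ is the union over $j\in\{1,\dots,m\}$ of the events $\{u_{2j}-u_{2j-2}<\epsilon\}$ and $\{v_{2j}-v_{2j-2}<\epsilon\}$, subadditivity of the integral lets me reduce to bounding $2m$ terms, in each of which exactly one pair is constrained. For such a term the $m-1$ unconstrained pairs are each dominated by the finite constant $c\int_{\R^{2d}}|f(z_1)f(z_2)|\,|z_1|^{\frac{2}{H}-d}\,dz$ furnished by Lemmas \ref{lema2} and \ref{lema3}, precisely as in \eref{t.e.5}. Everything then rests on a single estimate: that the one constrained pair integral is bounded, \emph{uniformly in} $n$, by $c\,\epsilon^{2-Hd}\int_{\R^{2d}}|f(z_1)f(z_2)|\,|z_1|^{\frac{2}{H}-d}\,dz$, which tends to $0$ as $\epsilon\to0$ because $2-Hd>0$.

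To prove this key estimate, write $P_i=u_i^{2H}+v_i^{2H}$ and bound the single surviving oscillation factor by
\[
\E\Big[\big|e^{\iota\frac{z_1}{n^H}\cdot Y}-1\big|\Big]\le c\Big[\min\big(2,\tfrac{|z_1|}{n^H}P_1^{-1/2}\big)+\min\big(2,\tfrac{|z_1|}{n^H}P_2^{-1/2}\big)\Big],
\]
where $Y=\frac{X_2}{\sqrt{P_2}}-\frac{X_1}{\sqrt{P_1}}$. In the first term the factor $P_2^{-d/2}$ does not appear inside the minimum, so I integrate it over the $\epsilon$-pinned region: using $Hd>1$ for the inner $v$-integral and $Hd<2$ for the outer one,
\[
\int_0^\epsilon\!\!\int_0^t (u^{2H}+v^{2H})^{-d/2}\,dv\,du\le c\,\epsilon^{2-Hd}.
\]
The remaining factor $P_1^{-d/2}\min(2,\frac{|z_1|}{n^H}P_1^{-1/2})$ is integrated in $(u_1,v_1)$ by splitting at the threshold $P_1=(|z_1|/n^H)^2$; the region $P_1<(|z_1|/n^H)^2$ contributes $c(|z_1|/n^H)^{\frac{2}{H}-d}$ (finite because $Hd<2$) and the region $P_1>(|z_1|/n^H)^2$ contributes the same order (finite because $H(d+1)>2$, i.e.\ $H>\frac{2}{d+1}$). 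Since $H(\frac{2}{H}-d)=2-Hd$, we have $n^{2-Hd}(|z_1|/n^H)^{\frac{2}{H}-d}=|z_1|^{\frac{2}{H}-d}$, so all powers of $n$ cancel; the second minimum-term is symmetric with $P_1,P_2$ interchanged. Integrating against $|f(z_1)|$ gives the claimed bound.

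The hard part is exactly this last estimate. At the critical scaling a crude modulus bound on $e^{\iota(\cdot)}-1$ produces a \emph{logarithmically divergent} time integral, so the oscillation must be retained and exploited through the threshold split; the two powers of $n$ then cancel only because $H(\frac{2}{H}-d)=2-Hd$, and the two pieces of the split converge only because $\frac{2}{d+1}<H<\frac{2}{d}$. A secondary, mostly bookkeeping, difficulty is that the permutation $\sigma$ couples the $u$- and $v$-orderings, so one must route the choice of which minimum-term to use so that the $\epsilon$-constrained increment always lands on a free $P_i^{-d/2}$ factor; this is always possible because the constraint forces at least one increment to be smaller than $\epsilon$.
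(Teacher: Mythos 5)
Your per-pair analysis is essentially sound and matches how the paper finishes: the $\epsilon$-pinned pair yields $c\,\epsilon^{2-Hd}$ through the $a\wedge b$ feature of Lemma \ref{lema2}, and the oscillation factor must be retained on one sub-pair so that the $n^{Hd-2}$ from Lemma \ref{lema3} cancels the prefactor $n^{2-Hd}$. But there is a genuine gap at the very first step, and it is exactly the point you dismiss as ``mostly bookkeeping.'' In $H^{\sigma}_{n,2m,\epsilon}$ the variance splits, by independence, into $\Var\big(\sum_j\xi_j\cdot B^{H,1}_{u_j}\big)+\Var\big(\sum_j\xi_j\cdot B^{H,2}_{v_{\sigma(j)}}\big)$, where the $u$'s and the $v$'s are \emph{both} increasingly ordered but the $v$-side is paired to the $\xi$'s through $\sigma$. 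Applying \eref{eq1} to the two pieces gives lower bounds $c\sum_i\big|\sum_{j\ge i}\xi_j\big|^2(u_i-u_{i-1})^{2H}$ and $c\sum_i\big|\sum_{k\ge i}\xi_{\sigma^{-1}(k)}\big|^2(v_i-v_{i-1})^{2H}$: two quadratic forms diagonalized by \emph{different} linear substitutions. For $\sigma\neq\mathrm{id}$ the partial sums $\sum_{k\ge i}\xi_{\sigma^{-1}(k)}$ are in general not among the $\eta_i=\sum_{j\ge i}\xi_j$ (take $2m=4$ and $\sigma$ transposing $1,2$: then $\xi_1+\xi_3+\xi_4$ is not any $\eta_i$), so your single change of variables does not produce the separated exponent $\exp\big\{-\frac{\kappa_H}{2}\sum_i|\eta_i|^2(w_i^{2H}+s_i^{2H})\big\}$, and the whole reduction to ``a product of $m$ one-pair integrals'' collapses. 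Nor can you discard one of the two forms: $\prod_i w_i^{-Hd}$ alone is not integrable near the origin once $Hd\ge 1$ (which is forced by $H>\frac{2}{d+1}$); integrability comes precisely from the coupled factors $(w_i^{2H}+s_i^{2H})^{-\frac d2}$, which exist only when both sides carry the same pairing. In Proposition \ref{tight} this problem never arises because there the square $(I_t^{\sigma}(\xi))^2$ carries the \emph{same} permutation in both coordinates, which can be relabeled away; your proposal implicitly assumes the same is true here, and it is not.

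The paper's proof supplies the missing device: a Cauchy--Schwarz inequality with respect to the measure $\Phi_{n,2m}(\xi,y)\,d\xi\,dy$. Writing the time integral over $R_{\epsilon,\ell,2}$ as $J_t(\xi)\,J^{\sigma,\epsilon}_t(\xi)$ (unconstrained, identity-ordered $u$-factor times constrained, $\sigma$-paired $v$-factor), it bounds the mixed term by $\big(\int\Phi_{n,2m}J_t^2\big)^{1/2}\big(\int\Phi_{n,2m}(J^{\sigma,\epsilon}_t)^2\big)^{1/2}$. The first factor is at most $(H_{n,2m})^{1/2}$, which is uniformly bounded in $n$ by Propositions \ref{tight} and \ref{odd}. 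In the second factor, $(J^{\sigma,\epsilon}_t)^2$ has the same permutation $\sigma$ in both of its coordinates, so after relabeling it becomes an integral over $R_{\epsilon,\ell}$ with the \emph{identity} pairing, the $\epsilon$-constraint now sitting on both the $w$- and the $s$-increments. Only at that point does the tightness machinery (and your per-pair estimate) apply, giving $c\,\epsilon^{2-Hd}$ for that factor and hence $c\,\epsilon^{(2-Hd)/2}$ overall, uniformly in $n$. So the skeleton of your key estimate is correct, but without the Cauchy--Schwarz symmetrization that eliminates $\sigma$, the argument cannot be started for any nontrivial permutation, and no choice of ``routing'' of the minimum terms repairs this, since routing presupposes the product-over-pairs structure that is exactly what is missing.
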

\begin{proof} Note that 
\[
R_{\epsilon}=\cup^{m}_{\ell=1}\left(R_{\epsilon,\ell,1}\cup R_{\epsilon,\ell,2}\right),
\]
where $R_{\epsilon,\ell,1}=R_{\epsilon}\cap \big\{|u_{2\ell}-u_{2\ell-2}|<\epsilon \big\}$ and $R_{\epsilon,\ell,2}=R_{\epsilon}\cap \big\{|v_{2\ell}-v_{2\ell-2}|<\epsilon \big\}$. So it suffices to show that
\[
\lim\limits_{\epsilon\to 0}\sup_{n}\int_{\R^{4md}}\int_{R_{\epsilon,\ell,i}} \Phi_{n,2m}(\xi,y)\exp\Big\{ -\frac{1}{2}\Var\Big( \sum^{2m}_{j=1} \xi_j\cdot  \big(B^{H,1}_{u_j} - B^{H,2}_{v_{\sigma(j)}} \big)\Big)\Big\}\,  du\, dv\, d\xi\, dy=0
\]
for all $\ell=1,2,\dots,m$ and for $i=1,2$. We will consider only the case $i=2$ and the case $i=1$ could be treated in the same way.

Let 
\[
J_t(\xi)=\int_{\{0<u_{1}<u_{2}<\cdots<u_{2m}<t\}}\exp\Big\{ -\frac{1}{2}\Var\Big( \sum^{2m}_{j=1} \xi_j\cdot  B^{H}_{u_j}\Big)\Big\}\,  du
\]
and
\[
J^{\sigma,\epsilon}_t(\xi)=\int_{\{0<u_{1}<u_{2}<\cdots<u_{2m}<t,\, |v_{2\ell}-v_{2\ell-2}|<\epsilon \}}\exp\Big\{ -\frac{1}{2}\Var\Big( \sum^{2m}_{j=1} \xi_j\cdot  B^{H}_{v_{\sigma(j)}}\Big)\Big\}\,  dv.
\]
Applying Cauchy-Schwartz inequality, we obtain
\begin{align*}
&\int_{\R^{4md}}\int_{R_{\epsilon,\ell,2}} \Phi_{n,2m}(\xi,y)\exp\Big\{ -\frac{1}{2}\Var\Big( \sum^{2m}_{j=1} \xi_j\cdot  \big(B^{H,1}_{u_j} - B^{H,2}_{v_{\sigma(j)}} \big)\Big)\Big\}\,  du\, dv\, d\xi\, dy\\
&\leq 
\Big(\int_{\R^{4md}}\Phi_{n,2m}(\xi,y)(J_t(\xi))^2\, d\xi\, dy\Big)^{\frac{1}{2}}\Big(\int_{\R^{4md}}\Phi_{n,2m}(\xi,y)(J^{\sigma,\epsilon}_t(\xi))^2\, d\xi\, dy\Big)^{\frac{1}{2}}\\
&\leq (H_{n,2m})^{\frac{1}{2}}\Big(\int_{\R^{4md}}\Phi_{n,2m}(\xi,y)(J^{\sigma,\epsilon}_t(\xi))^2\, d\xi\, dy\Big)^{\frac{1}{2}}.
\end{align*}
Note that 
\begin{align*}
&\int_{\R^{4md}}\Phi_{n,2m}(\xi,y)(J^{\sigma,\epsilon}_t(\xi))^2\, d\xi\, dy\\
&=\int_{\R^{4md}}\int_{R_{\epsilon,\ell}} \Phi_{n,2m}(\xi,y)\exp\Big\{ -\frac{1}{2}\Var\Big( \sum^{2m}_{j=1} \xi_j\cdot  \big(B^{H,1}_{u_j} - B^{H,2}_{v_j} \big)\Big)\Big\}\,  du\, dv\, d\xi\, dy,
\end{align*}
where
\[
R_{\epsilon,\ell}=R_{\epsilon,\ell,1}\cap R_{\epsilon,\ell,2}.
\]
Since $H_{n,2m}$ is uniformly bounded in $n$, we only need to show that 
\[
\lim\limits_{n\to\infty}\int_{\R^{4md}}\int_{R_{\epsilon,\ell}} \Phi_{n,2m}(\xi,y)\exp\Big\{ -\frac{1}{2}\Var\Big( \sum^{2m}_{j=1} \xi_j\cdot  \big(B^{H,1}_{u_j} - B^{H,2}_{v_j} \big)\Big)\Big\}\,  du\, dv\, d\xi\, dy=0
\]
for all $\ell=1,2,\dots,m$.

Using Lemma \ref{lnd} and then making the change of variables $w_i=u_i-u_{i-1}$, $s_i=v_i-v_{i-1}$ and $\eta_{i}=\sum^{2m}_{j=i} \xi_j$ for $i=1,2,\dots,2m$ with the convention $u_0=v_0=0$ and $\eta_{2m+1}=0$, we obtain
\begin{align} \label{epsilon1}
&\int_{\R^{4md}}\int_{R_{\epsilon,\ell}} 
\Phi_{n,2m}(\xi,y)\exp\Big\{ -\frac{1}{2}\Var\Big( \sum^{2m}_{j=1} \xi_j\cdot  \big(B^{H,1}_{u_j} - B^{H,2}_{v_j} \big)\Big)\Big\}\,  du\, dv\, d\xi\, dy \notag\\
&\leq n^{m(2-Hd)} \int_{\R^{4md}} \int_{\widehat{R}_{\epsilon,\ell}}\prod_{i=1}^{2m}|f(y_i)| \prod^{2m}_{i=1}\big|\exp\big(\iota \frac{y_i}{n^H}\cdot (\eta_{i+1}-\eta_i)\big)-1\big| \notag\\
         &\quad\quad\quad \times  \exp\Big\{-\frac{\kappa_H}{2}\sum^{2m}_{i=1} |\eta_i|^2(w_i^{2H}+s_i^{2H})\Big\}\, dw\, ds\, d\eta\, dy,       
\end{align}
where 
\[
\widehat{R}_{\epsilon,\ell}=[0,t]^{4m}\cap\Big\{\sum^{2m}_{i=1} w_i<t,\; \sum^{2m}_{i=1} s_i<t,\; w_{2\ell}+w_{2\ell-1}<\epsilon,\; s_{2\ell}+s_{2\ell-1}<\epsilon\Big\}.
\]
Using the same argument as in the proof of Proposition \ref{tight}, we can prove that the right hand side of the inequality \eref{epsilon1} is less than a constant multiple of $\epsilon^{2-Hd}$. Letting $\epsilon\to0$ completes the proof.
\end{proof}

\medskip

For $1\leq k\leq m$, we define
\begin{align*}
     O_{2m,k}&=\Big\{u,v\in [0,t]^{2m}: u_1<u_2<\cdots<u_{2m},\, v_1<v_2<\cdots<v_{2m},  \\
    &\qquad  \qquad \frac{u_{2k}-u_{2k-2}}{2}<u_{2k}-u_{2k-1}\; \text{or}\; \frac{v_{2k}-v_{2k-2}}{2}<v_{2k}-v_{2k-1}\Big\}.
\end{align*}
Recall the definition of $\Phi_{n,2m}(\xi,y)$ in \eref{Phi}. The following result states that the integral over the domain $O_{2m,k}$ does not contribute to the limit of the $2m$-th moment, which will play a fundamental role in computing the limits of even moments. 
\begin{lemma} \label{half}
For any $1\leq k\leq m$,
\begin{equation*}
\lim_{n\to\infty}\int_{\R^{4md}} \int_{O_{2m,k}}  \Phi_{n,2m}(\xi,y)\exp \Big\{ -\frac{1}{2}\Var\big( \sum^{2m}_{j=1}\xi_j\cdot (B^{H,1}_{u_j}-B^{H,2}_{v_j})\big)\Big\}\, d\xi \, du\, dv\, dy
=0.
\end{equation*}
\end{lemma}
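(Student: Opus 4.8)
The goal is to show that the contribution of the domain $O_{2m,k}$ vanishes in the limit. The key structural feature of $O_{2m,k}$ is that on it, either the gap $u_{2k}-u_{2k-1}$ occupies at least half of the block $u_{2k}-u_{2k-2}$, or the analogous statement holds for the $v$-variables. The plan is to follow the template already established in the proof of Proposition \ref{tight}: pass to Fourier variables, use the local nondeterminism bound from Lemma \ref{lnd} (in the rewritten form \eref{eq1}), and then realize the Gaussian factor via the auxiliary independent normal vectors $\sqrt{\kappa_H}X_1,\dots,\sqrt{\kappa_H}X_{2m}$ exactly as in \eref{t.e.4.1}. After the change of variables $w_i=u_i-u_{i-1}$, $s_i=v_i-v_{i-1}$, $\eta_i=\sum_{j=i}^{2m}\xi_j$, the exponential becomes a product $\prod_i \exp\{-\tfrac{\kappa_H}{2}|\eta_i|^2(w_i^{2H}+s_i^{2H})\}$ and the constraint defining $O_{2m,k}$ reads, in the new coordinates, $w_{2k-1}\ge \tfrac12(w_{2k-1}+w_{2k})$ or $s_{2k-1}\ge \tfrac12(s_{2k-1}+s_{2k})$.

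First I would split $O_{2m,k}$ into the two pieces according to which of the two disjunctive conditions holds (as in Lemma \ref{epsilon}, it suffices to treat one, say the $u$-condition $w_{2k-1}\ge w_{2k}$). Next, to exploit the independence of the $X_i$, I would replace by the bound $2$ all the oscillatory factors $|\exp(\cdots)-1|$ except those indexed $i=2k-1$ and $i=2k$ — in particular keeping the factor at the \emph{odd} index $2k-1$, which is the one constrained. This decouples the integral into a product over the remaining blocks, each of which is estimated by $\int_{\R^{2d}}|f(z_1)f(z_2)||z_1|^{\frac{2}{H}-d}\,dz_1\,dz_2$ via Lemmas \ref{lema2} and \ref{lema3}, exactly as in \eref{t.e.5}, times a single distinguished block carrying the constraint $w_{2k-1}\ge w_{2k}$.

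The heart of the argument is the distinguished block. There the relevant integral, after taking expectation in $X$, has the schematic form
\[
n^{2-Hd}\int_{\R^{2d}}|f(z_{2k-1})f(z_{2k})| \int (w_{2k-1}^{2H}+s_{2k-1}^{2H})^{-\frac d2}(w_{2k}^{2H}+s_{2k}^{2H})^{-\frac d2}\,\E\big|e^{\iota\,\zeta_{2k-1}}-1\big|\,\Big|_{w_{2k-1}\ge w_{2k}}\,dw\,ds\,dz,
\]
where $\zeta_{2k-1}$ is the scaled increment appearing in \eref{t.e.4.1}. The constraint $w_{2k-1}\ge w_{2k}$ forces the variable $w_{2k}$ to be integrated over a region controlled by $w_{2k-1}$, and the point is that on $O_{2m,k}$ one of the two gaps in a block is atypically large compared with the block length; this mismatch, combined with the scaling weight $n^{2-Hd}$ and the decay of the factors $(w^{2H}+s^{2H})^{-d/2}$, produces an extra negative power of $n$ (equivalently, a factor tending to $0$). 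I expect the main obstacle to lie precisely here: one must show that keeping the oscillatory factor at the constrained odd index, rather than bounding it by $2$, yields a quantitative gain — this is where Lemma \ref{lema3} (which requires $Hd>2-H$) must be applied sharply, using the linearized bound $|e^{\iota a}-1|\le |a|\wedge 2$ and balancing the two regimes of $z_{2k-1}/n^H$ small versus large. Establishing this decay uniformly while respecting the integrability hypotheses on $f$ is the delicate step; once it is in place, letting $n\to\infty$ kills the distinguished block and hence the whole integral over $O_{2m,k}$.
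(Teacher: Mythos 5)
There is a genuine gap, and it begins with the translation of the constraint into the new coordinates. With $w_i=u_i-u_{i-1}$ one has $u_{2k}-u_{2k-1}=w_{2k}$ and $u_{2k}-u_{2k-2}=w_{2k-1}+w_{2k}$, so the condition $\tfrac{u_{2k}-u_{2k-2}}{2}<u_{2k}-u_{2k-1}$ defining $O_{2m,k}$ reads $w_{2k-1}<w_{2k}$, not $w_{2k-1}\ge\tfrac12(w_{2k-1}+w_{2k})$ as you wrote: you have the inequality reversed. This is not cosmetic. Your distinguished region $\{w_{2k-1}\ge w_{2k}\}$ is (up to a null set) the complement of the correct one inside the ordered simplex, and it is exactly where the configurations $w_{2k}\sim 1/n$, $w_{2k-1}\sim 1$ live, i.e.\ the pairings $u_{2k}\approx u_{2k-1}$ that produce the \emph{non-vanishing} limit of the even moments in Proposition \ref{even}. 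Concretely, on the sub-region $w_{2k},s_{2k}\in(0,|z_{2k-1}|^{1/H}/n)$, $w_{2k-1},s_{2k-1}\in(t/2,t)$, the kept factor $\E\big|e^{\iota\zeta_{2k-1}}-1\big|$ is bounded below by a positive constant (the coefficient of $X_{2k}$ exceeds $1/\sqrt2$), and integrating $\rho_{2k}^{-d}=(w_{2k}^{2H}+s_{2k}^{2H})^{-d/2}$ over that small box produces $c\,n^{Hd-2}|z_{2k-1}|^{2/H-d}$, which cancels the prefactor $n^{2-Hd}$ exactly; your block integral is therefore bounded below by a positive constant and cannot tend to zero.

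Even after correcting the direction of the constraint, the estimate you outline (keep only the odd-index factor, bound the factor at index $2k$ by $2$, then apply Lemmas \ref{lema2} and \ref{lema3}) does not yield any negative power of $n$: on the correctly constrained region take $w_{2k-1},s_{2k-1}\sim|z_{2k-1}|^{1/H}/n$ and $w_{2k},s_{2k}\sim t$ (allowed, since the small gaps are now the odd ones); the kept factor is again of order one, Lemma \ref{lema3} applied to $(w_{2k-1},s_{2k-1})$ gives $n^{Hd-2}|z_{2k-1}|^{2/H-d}$, and Lemma \ref{lema2} applied to $(w_{2k},s_{2k})$ gives $c\,t^{2-Hd}$, so the block is $O(1)$ --- you have merely reproduced the tightness bound of Proposition \ref{tight}. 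The vanishing must come from also exploiting the oscillatory (mean-zero) factor at the \emph{even} index $2k$. The paper's proof proceeds differently: first a Cauchy--Schwarz argument against the uniformly bounded $H_{n,2m}$ reduces the ``or'' region $O_{2m,k}$ to the ``and'' region $\widehat O_{2m,k}$ where both the $u$- and $v$-conditions hold (your ``treat one disjunct'' shortcut omits this symmetrization, which is what makes one case suffice); then, after the change of variables and the local nondeterminism bound \eref{eq1}, the key estimate is imported from Proposition 3.3 of \cite{hnx}: one keeps $m+1$ oscillatory factors (both indices $2k-1$ and $2k$), splits each via $|e^{\iota(a-b)}-1|\le|e^{\iota a}-1|+|e^{\iota b}-1|$, gains an extra $n^{Hd-2}$ from a second application of Lemma \ref{lema3} in the diagonal terms, and controls the cross terms (where both factors involve the same $\rho_{2k}$) precisely through the domain constraint, since $\int_0^{w_{2k}}\int_0^{s_{2k}}\rho_{2k-1}^{-d}\,dw_{2k-1}\,ds_{2k-1}\le c\,(w_{2k}\wedge s_{2k})^{2-Hd}$ by Lemma \ref{lema2}. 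None of this mechanism appears in your outline, and you yourself flag the decisive decay as unproven; as it stands the proposal does not establish the lemma.
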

\begin{proof} Define
\begin{align*}
     \widehat{O}_{2m,k}&=\big\{u,v\in [0,t]^{2m}: u_1<u_2<\cdots<u_{2m}, v_1<v_2<\cdots<v_{2m},\\
             & \qquad\quad  \frac{u_{2k}-u_{2k-2}}{2}<u_{2k}-u_{2k-1}, \frac{v_{2k}-v_{2k-2}}{2}<v_{2k}-v_{2k-1}\big\}.
\end{align*}

Using Cauchy-Schwartz inequality, we obtain 
\begin{align*}
&\int_{\R^{4md}} \int_{O_{2m,k}}  \Phi_{n,2m}(\xi,y)\exp \Big\{-\frac{1}{2}\Var\Big( \sum^{2m}_{j=1}\xi_j\cdot (B^{H,1}_{u_j}-B^{H,2}_{v_j})\Big)\Big\}\, d\xi \, du\, dv\, dy\\
&\leq (H_{n,2m})^{\frac{1}{2}} \bigg(\int_{\R^{4md}}  \Phi_{n,2m}(\xi,y)\int_{\widehat{O}_{2m,k}}  \exp \Big\{ -\frac{1}{2}\Var\big( \sum^{2m}_{j=1}\xi_j\cdot (B^{H,1}_{u_j}-B^{H,2}_{v_j})\big)\Big\}\,  du\, dv\, d\xi \, dy\bigg)^{\frac{1}{2}}\\
&\leq c_1\bigg(\int_{\R^{4md}}  \Phi_{n,2m}(\xi,y)\int_{\widehat{O}_{2m,k}}  \exp \Big\{ -\frac{1}{2}\Var\big( \sum^{2m}_{j=1}\xi_j\cdot (B^{H,1}_{u_j}-B^{H,2}_{v_j})\big)\Big\}\,  du\, dv\, d\xi \, dy\bigg)^{\frac{1}{2}}.
\end{align*}
So it suffices to show
\[
\lim_{n\to\infty} \int_{\R^{4md}}  \Phi_{n,2m}(\xi,y)\int_{\widehat{O}_{2m,k}}  \exp \Big\{ -\frac{1}{2}\Var\big( \sum^{2m}_{j=1}\xi_j\cdot (B^{H,1}_{u_j}-B^{H,2}_{v_j})\big)\Big\}\,  du\, dv\, d\eta \, dy=0.
\]
 
For $j=1,2,\dots, 2m$, we make the change of variables $w_j=u_j-u_{j-1}$ and $s_j=v_j-v_{j-1}$ with the convention $u_0=v_0=0$.  For $k=1,2,\dots,m$, define
\[
D_{2m,k}=\Big\{w,s\in [0,t]^{2m}: \sum^{2m}_{j=1}w_j< t,\; \sum^{2m}_{j=1}s_j < t, \; w_{2k-1}<w_{2k},\; s_{2k-1}<s_{2k}\Big\}.
\]
Using the second inequality in \eref{eq1},
\begin{align*}
&\int_{\R^{4md}} \int_{\widehat{O}_{2m,k}}  \Phi_{n,2m}(\xi,y)\exp \Big\{ -\frac{1}{2}\Var\big( \sum^{2m}_{j=1}\xi_j\cdot B^{H}_{u_j}\big)-\frac{1}{2}\Var\big( \sum^{2m}_{j=1}\xi_j\cdot B^{H}_{v_j}\big)\Big\} \, du\, dv\, d\xi \, dy  \\
&  \leq  c_2\, n^{m(2-Hd)}
\int_{\R^{2md}}  \int_{D_{2m,k}} \prod^{2m}_{j=1}
           |f(y_j)| \prod^{2m}_{j=1}\big(w_j^{2H}+s_j^{2H}\big)^{-\frac{d}{2}}   \\
&  \quad  \times  \E \bigg( \prod^{2m}_{j=1}
\Big|\exp\Big(\iota \frac{y_j\cdot X_{j+1}}{n^H\sqrt{
w_{j+1}^{2H}+s_{j+1}^{2H}}} - \iota \frac{y_j \cdot X_j}{n^H
\sqrt{w_j^{2H}+s_j^{2H}}}\Big)-1\Big| \bigg)\, dw\, ds\, dy, 
\end{align*}
where $\sqrt{\kappa_H}X_j$ ($1\leq j\leq 2m$) are independent copies of the $d$-dimensional standard normal random vector and $X_{2m+1}=0$. The rest of proof is similar to that of Proposition 3.3 in \cite{hnx}.
\end{proof}

\medskip

Recall the definition of $D$ in \eref{domain}. For $\ell=1,2,\dots, m$ and $K>0$, define 
\begin{equation} \label{dnkl}
D^n_{K,\ell}=D\cap \big\{u_{2\ell}-u_{2\ell-1}\geq K/n\;\; \text{or}\;\; v_{2\ell}-v_{2\ell-1}\geq K/n\big\}.
\end{equation}
The following result implies that the domain $D^n_{K,\ell}$ does not contribute to the limit of even moments.
\begin{lemma} \label{K} For any $\si\in\mathscr{P}$ and $\ell=1,2,\dots, m$, 
\[
\lim\limits_{K\to\infty}\limsup_{n\to\infty} \int_{\R^{4md}}
           \int_{D^n_{K,\ell}} \Phi_{n,2m}(\xi,y) \exp\Big\{-\frac{1}{2}\Var\Big(\prod_{j=1}^{2m} \xi_j\cdot(B^{H,1}_{u_j}-B^{H,2}_{v_{\sigma(j)}})\Big)\Big\}\, du\, dv\, d\xi dy=0.
\]
\end{lemma}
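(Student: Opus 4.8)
The plan is to run the same chain of reductions used in Proposition \ref{tight} and Lemma \ref{epsilon}, the genuinely new feature being that the lower bound $u_{2\ell}-u_{2\ell-1}\geq K/n$ (or its analogue for $v$) has to be converted into a quantity that decays as $K\to\infty$, uniformly in $n$. First I would observe that $\Phi_{n,2m}(\xi,y)$ depends only on $(\xi,y)$ and that, by the independence of $B^{H,1}$ and $B^{H,2}$,
\[
\Var\Big(\sum_{j=1}^{2m}\xi_j\cdot(B^{H,1}_{u_j}-B^{H,2}_{v_{\sigma(j)}})\Big)=\Var\Big(\sum_{j=1}^{2m}\xi_j\cdot B^{H,1}_{u_j}\Big)+\Var\Big(\sum_{j=1}^{2m}\xi_{\sigma^{-1}(j)}\cdot B^{H,2}_{v_j}\Big),
\]
so that, for fixed $(\xi,y)$, the $u$- and $v$-integrals factorize. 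Since $D^n_{K,\ell}\subset\big(D\cap\{u_{2\ell}-u_{2\ell-1}\geq K/n\}\big)\cup\big(D\cap\{v_{2\ell}-v_{2\ell-1}\geq K/n\}\big)$, I would split the domain into these two pieces and treat the first; the second is identical after interchanging the roles of the two motions. On that piece the $u$-factor carries the gap constraint while the $v$-factor is the unconstrained ordered (permuted) integral, and a Cauchy--Schwarz argument in $(\xi,y)$ exactly as in Lemma \ref{epsilon} bounds the whole expression by $(H_{n,2m})^{1/2}$ times the square root of a symmetric integral in which the permutation $\sigma$ has disappeared and the gap constraint $u_{2\ell}-u_{2\ell-1}\geq K/n$ is imposed. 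Because $H_{n,2m}$ is uniformly bounded in $n$ by Proposition \ref{tight}, this reduces the problem to showing that this symmetric constrained integral tends to $0$ as $K\to\infty$, uniformly in $n$.

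Next I would apply the lower bound \eref{eq1} to both variance terms, change variables to the increments $w_i=u_i-u_{i-1}$, $s_i=v_i-v_{i-1}$ and to $\eta_i=\sum_{j=i}^{2m}\xi_j$, and introduce independent scaled standard normal vectors $\sqrt{\kappa_H}X_i$ to rewrite the Gaussian integral in $\eta$ as an expectation, precisely as in the passage from \eref{t.e.4} to \eref{t.e.4.1}. This produces the weight $\prod_{i=1}^{2m}(w_i^{2H}+s_i^{2H})^{-d/2}$ together with the product of oscillation factors $\big|\exp(\iota\, y_i n^{-H}\cdot(\cdots))-1\big|$, now integrated over the region where $w_{2\ell}\geq K/n$ (and its mirror copy). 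Using the independence of the $X_i$ and bounding the even-indexed oscillation factors by $2$, the integral factorizes into a product over the $m$ pairs, exactly as in the derivation of \eref{t.e.5}.

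Finally I would isolate the $\ell$-th pair. The $m-1$ pairs other than $\ell$ are estimated as in Proposition \ref{tight} by Lemmas \ref{lema2} and \ref{lema3}, yielding a finite constant independent of $n$ and $K$. For the $\ell$-th pair I would rescale its increments by $n$, so that the constraint becomes $\widetilde{w}_{2\ell}\geq K$ on the rescaled variable; after this rescaling the phase in $\exp(\iota\, y\, n^{-H}\cdot(\cdots))$ becomes $n$-independent, and, using $|e^{\iota\theta}-1|\leq\min(|\theta|,2)$, the single-pair integral is dominated, uniformly in $n$, by the tail over $\{\widetilde{w}_{2\ell}\geq K\}$ of the integrable density appearing in the definition of $D_{H,d}$. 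Since that density is integrable precisely when $H>\frac{2}{d+2}$, its tail vanishes as $K\to\infty$, which gives the claim. The main obstacle is exactly this last step: one must arrange the bookkeeping of which increment ($w$ or $s$) carries the constraint so that the rescaling estimate is uniform in $n$ (allowing $\limsup_{n\to\infty}$ to be taken before $K\to\infty$), and one must verify that the large-gap region genuinely corresponds to the integrable tail of the $D_{H,d}$-integrand, which is where the finiteness of $D_{H,d}$ and the sharp Hurst condition enter.
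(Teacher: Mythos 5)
Your reductions up to the last step track the paper's own proof: removing the permutation and splitting off the constraint by Cauchy--Schwarz, absorbing one factor into $(H_{n,2m})^{1/2}$ (uniformly bounded in $n$ by Proposition \ref{tight}), passing to increments via \eref{eq1}, introducing the Gaussian vectors $X_i$, and factorizing into pairs after bounding the even-indexed oscillation factors by $2$. The gap is in your final step, and it is not a matter of bookkeeping. After your reductions, the constraint $\geq K/n$ sits on the \emph{even} increment of the $\ell$-th pair, while the surviving oscillation factor of that pair couples $X_{2\ell-1}$ and $X_{2\ell}$, scaled by the odd and the even increment respectively. Splitting it by the triangle inequality gives two terms. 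The term whose phase involves $X_{2\ell}$ does yield, after rescaling by $n$, the tail over $\{\widetilde w_{2\ell}\geq K\}$ of an integrable density, exactly as you say. But the cross term --- oscillation attached to the \emph{unconstrained} odd increment, bare weight $(\widetilde w_{2\ell}^{2H}+\widetilde s_{2\ell}^{2H})^{-\frac{d}{2}}$ attached to the constrained even one --- does not decay: after rescaling, the even-increment integral is $\int_{[K,nt]\times[0,nt]}(\widetilde w^{2H}+\widetilde s^{2H})^{-\frac{d}{2}}\,d\widetilde w\, d\widetilde s$, which is of order $(nt)^{2-Hd}$ for every fixed $K$, because the weight is \emph{not} integrable at infinity when $Hd<2$ (removing the strip $\widetilde w<K$ deletes only an $O(K^{2-Hd})$ piece, by Lemma \ref{lema2}). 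Multiplying by the prefactor $n^{Hd-2}$ and by the $O(|z|^{\frac{2}{H}-d})$ bound coming from the odd increment leaves a quantity of order $t^{2-Hd}|z|^{\frac{2}{H}-d}$ with no decay in $K$; moreover its $\limsup_n$ is genuinely bounded below by a positive constant independent of $K$, since on the region where the rescaled odd increments are $O(1)$ and the even ones lie in $[K,nt]^2$ the expected modulus of the oscillation factor is bounded away from zero. So the claimed uniform domination by a tail of the $D_{H,d}$-integrand is false as stated.

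The missing ingredient is Lemma \ref{half}. The paper's proof first uses it to replace $\widehat D^n_{K,\ell}$ by $\widetilde{D}^n_{K,\ell}$, on which $\frac{u_{2\ell}-u_{2\ell-2}}{2}>u_{2\ell}-u_{2\ell-1}\geq K/n$, i.e.\ $u_{2\ell-1}-u_{2\ell-2}>u_{2\ell}-u_{2\ell-1}\geq K/n$, and similarly for $v$. Thus the \emph{odd} increments of the $\ell$-th pair inherit the lower bound $K/n$, and after rescaling \emph{both} terms of the triangle-inequality split carry the constraint on the very increment that scales their own phase; each is then dominated by the tail of a convergent integral (this is where $2-H<Hd$, i.e.\ the finiteness behind Lemma \ref{lema3} and $D_{H,d}$, enters), which vanishes as $K\to\infty$ uniformly in $n$. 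In other words, the dangerous region --- even increment large, odd increment small --- is precisely (up to the stated factor $\frac12$) the region $O_{2m,\ell}$ that Lemma \ref{half} shows to be negligible; without invoking that lemma, or some equivalent device transferring the lower bound to the odd increments, your chain of estimates cannot close.
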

\begin{proof} Let $t=\max(t_1,t_2)$. Define 
\begin{align*}
\widehat{D}^n_{K,\ell}
&=\big\{u,v\in [0,t]^{2m}: u_{1}<u_{2}<\cdots<u_{2m},\, v_{1}<v_{2}<\cdots<v_{2m},\\
&\qquad\qquad  u_{2\ell}-u_{2\ell-1}\geq K/n,\; v_{2\ell}-v_{2\ell-1}\geq K/n\big\}
\end{align*}
and
\[
I^n_{K,\ell}=\int_{\R^{4md}}
           \int_{D^n_{K,\ell}} \Phi_{n,2m}(\xi,y) \exp\Big\{-\frac{1}{2}\Var\Big(\prod_{j=1}^{2m} \xi_j\cdot(B^{H,1}_{u_j}-B^{H,2}_{v_{\sigma(j)}})\Big)\Big\}\, du\, dv\, d\xi\, dy.
\]
Applying Cauchy-Schwartz ineuqality,
\[
I^n_{K,\ell}
\leq c_1\bigg(\int_{\R^{4md}}
           \int_{\widehat{D}^n_{K,\ell}} \Phi_{n,2m}(\xi,y)\exp\Big\{-\frac{1}{2}\Var\Big(\sum^{2m}_{j=1}\xi_j\cdot \big(B^{H,1}_{u_j}-B^{H,2}_{v_j}\big)\Big)\Big\}\, du\, dv\, d\xi\, dy\bigg)^{\frac{1}{2}}.                    
\]
According to Lemma \ref{half}, we can replace $\widehat{D}^n_{K,\ell}$ in the above inequality with 
\[
\widetilde{D}^n_{K,\ell}
=\widehat{D}^n_{K,\ell}\cap \Big\{ \frac{u_{2\ell}-u_{2\ell-2}}{2}>u_{2\ell}-u_{2\ell-1}\geq K/n,\;  \frac{v_{2\ell}-v_{2\ell-2}}{2}> v_{2\ell}-v_{2\ell-1}\geq K/n\Big\}.
\]
The rest of the proof is similar to that of Proposition 3.4 in \cite{hnx}.
\end{proof}

\medskip

We next divide $\mathscr{P}$ into two subsets. In section 3.4, we will show that permutations in one subset do not contribute to the convergence of even moments.

For each $\sigma$ in $\mathscr{P}$, we introduce the following decomposition of $I=\{1,2,\dots,2m\}$:
\[
\begin{array}{ll}
I^{\sigma}_{ee}=\{j\in I:\;  j\;  \text{is even and}\;  \sigma(j)\; \text{is even}\}, &
I^{\sigma}_{eo}=\{j\in I:\;  j\;  \text{is even and}\;  \sigma(j)\;  \text{is odd} \},\\
I^{\sigma}_{oe}=\{j\in I:\;  j\;  \text{is odd and}\;  \sigma(j)\;  \text{is even} \}, &
I^{\sigma}_{oo}=\{j\in I:\;  j\;  \text{is odd and}\;  \sigma(j)\;  \text{is odd} \}.
\end{array}
\]
Let $I_e=\{2j: 1\leq j\leq m\}$ and $I_o=\{2j-1:  1\leq j\leq m\}$. Then $I_e=I^{\sigma}_{ee}+I^{\sigma}_{eo}$ and $I_o=I^{\sigma}_{oe}+I^{\sigma}_{oo}$ for all $\sigma$ in $\mathscr{P}$. We make the change of variables $w_{2k}= u_{2k}$, $w_{2k-1}=n( u_{2k}-u_{2k-1})$, $s_{2k}= v_{2k}$, $s_{2k-1}=n(  v_{2k}-v_{2k-1})$ for $k=1,2,\dots,m$. Define 
\begin{align} \label{Dn}
D_n&=\Big\{ w,s\in\R^{2m}: 0<w_2<w_4<\cdots<w_{2m}<t_1, 0<s_2<s_4<\cdots<s_{2m}<t_2 \notag\\
&\qquad\qquad  0<w_{2k-1}<n(w_{2k}-w_{2k-1}),\; 0<s_{2k-1}<n(s_{2k}-s_{2k-1}),\; 1\leq k\leq m\Big\}.
\end{align}
From the above decomposition of $I$,
\begin{align} \label{dcmp}
& n^{2m} \E \Big[\int_{D} \prod_{j=1}^{2m}f\big(n^{H}B^{H,1}_{u_j}-n^{H}B^{H,2}_{v_{\sigma(j)}}\big)\, du\, dv \Big] \notag \\
=&\E \Big\{ \int_{D_n}  \prod_{j\in I^{\sigma}_{ee}}
             f\big(n^H [B^{H,1}_{w_j}- B^{H,2}_{s_{\sigma(j)}}]\big) \prod_{j\in I^{\sigma}_{eo}}f\big(n^H[B^{H,1}_{w_j}- B^{H,2}_{s_{\sigma(j)+1}}]-n^H[B^{H,2}_{s_{\sigma(j)+1}-\frac{s_{\sigma(j)}}{n}}-B^{H,2}_{s_{\sigma(j)+1}}]\big)\notag\\
     &\quad\times \prod_{j\in I^{\sigma}_{oe}}  f\big(n^H[B^{H,1}_{w_{j+1}}- B^{H,2}_{s_{\sigma(j)}}]+n^H[B^{H,1}_{w_{j+1}-\frac{w_j}{n}}-B^{H,1}_{w_{j+1}}]\big)\notag \\
     &\qquad\times  \prod_{j\in I^{\sigma}_{oo}}f\Big(n^H [B^{H,1}_{w_{j+1}}-B^{H,2}_{s_{\sigma(j)+1}}] +n^H[B^{H,1}_{w_{j+1}-\frac{w_j}{n}}-B^{H,1}_{w_{j+1}}]-n^H[B^{H,2}_{s_{\sigma(j)+1}-\frac{s_{\sigma(j)}}{n}}-B^{H,2}_{s_{\sigma(j)+1}}]\Big)\,dw \, ds\Big\}.
 \end{align}

Assume that $x_2, x_4,\dots, x_{2m}$ and $z_2, z_4,\dots, z_{2m}$ are linearly independent elements in some linear space. For any $\sigma$ in $\mathscr{P}$, let
\[
A^{\sigma}_{ee}=\big\{x_j-z_{\sigma(j)}:\, j\in I^{\sigma}_{ee}\big\},\quad A^{\sigma}_{oe}=\big\{x_{j+1} - z_{\sigma(j)}:\,  j \in I^{\sigma}_{oe}\big\};
\]
\[
A^{\sigma}_{eo}=\big\{x_j-z_{\sigma(j)+1}:\, j\in I^{\sigma}_{eo}\big\}, \quad A^{\sigma}_{oo}=\big\{x_{j+1} - z_{\sigma(j)+1}:\,  j \in I^{\sigma}_{oo}\big\}.
\]
Note that elements in each of the above sets are linearly independent. For simplicity, we use $\# A$ to denote the cardinality of a set $A$. Suppose $\#I^{\sigma}_{ee}=r$. Then $\#A^{\sigma}_{ee}=\#A^{\sigma}_{oo}=r$ and $\#A^{\sigma}_{eo}=\#A^{\sigma}_{oe}=m-r$. We are interested in the dimension of the set $A_{\sigma}:=A^{\sigma}_{ee}\cup  A^{\sigma}_{oe}\cup A^{\sigma}_{eo}\cup A^{\sigma}_{oo}$, that is, the maximum number of elements in $A_{\sigma}$ which are linearly independent. Since elements in $A^{\sigma}_{ee}\cup A^{\sigma}_{oe}$ are linearly independent, the dimension of $A_{\sigma}$ is greater than or equal to $m$. 

\begin{lemma} \label{permutation} The dimension of $A_{\sigma}$ is $m$ if and only if $\{(j,\sigma(j)):\, j\in I^{\sigma}_{ee}\}=\{(j+1, \sigma(j)+1):\,  j \in I^{\sigma}_{oo}\}$ and $\{(j+1,\sigma(j)):\,  j \in I^{\sigma}_{oe}\}=\{(j, \sigma(j)+1):\, j\in I^{\sigma}_{eo}\}$.
\end{lemma}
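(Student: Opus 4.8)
The plan is to reduce the claim to elementary linear algebra over the $2m$ linearly independent vectors $x_2,\dots,x_{2m},z_2,\dots,z_{2m}$. For $j\in I$ let $\lceil j\rceil$ denote $j$ if $j$ is even and $j+1$ if $j$ is odd; then every member of $A_\sigma$ can be written as $x_{\lceil j\rceil}-z_{\lceil\sigma(j)\rceil}$, and the four families $A^\sigma_{ee},A^\sigma_{eo},A^\sigma_{oe},A^\sigma_{oo}$ are obtained by restricting $j$ to $I^\sigma_{ee},I^\sigma_{eo},I^\sigma_{oe},I^\sigma_{oo}$. The key dictionary is that such a vector depends on $j$ only through the pair $(\lceil j\rceil,\lceil\sigma(j)\rceil)$, so two of them are equal exactly when the pairs are equal. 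Under this dictionary the two displayed set-equalities say precisely that $A^\sigma_{oo}=A^\sigma_{ee}$ and $A^\sigma_{eo}=A^\sigma_{oe}$ as sets of vectors.

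I would first record the fact underlying the inequality $\dim A_\sigma\ge m$: as $j$ runs over the $m$ indices with $\sigma(j)$ even, the values $\sigma(j)$ exhaust the even numbers once each, so the $m$ vectors of $A^\sigma_{ee}\cup A^\sigma_{oe}$ have pairwise distinct $z$-indices and are therefore linearly independent; in particular they form a basis of $\mathrm{span}\,A_\sigma$ if and only if $\dim A_\sigma=m$.

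For the \emph{if} direction, the dictionary immediately gives $A^\sigma_{oo}=A^\sigma_{ee}$ and $A^\sigma_{eo}=A^\sigma_{oe}$, whence $A_\sigma=A^\sigma_{ee}\cup A^\sigma_{oe}$ and $\dim A_\sigma=m$. For the \emph{only if} direction I would assume $\dim A_\sigma=m$, so that $A^\sigma_{ee}\cup A^\sigma_{oe}$ is a basis, and expand an arbitrary $v=x_{j+1}-z_{\sigma(j)+1}\in A^\sigma_{oo}$ in it. Comparing the coefficients of the $z$-vectors and using that their indices are distinct and exhaust the even numbers forces every coefficient but one to vanish and the last to equal $1$; hence $v$ is itself a basis vector, namely the unique one whose $z$-index is $\sigma(j)+1$. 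Matching the $x$-index then shows this basis vector equals $x_{j+1}-z_{\sigma(j)+1}$, that it belongs to $A^\sigma_{ee}$ (its index $j'=j+1$ satisfies $\sigma(j')=\sigma(j)+1$, which is even), and that its pair is $(j+1,\sigma(j)+1)$. As $j$ ranges over $I^\sigma_{oo}$ this yields $\{(j+1,\sigma(j)+1):j\in I^\sigma_{oo}\}\subseteq\{(j,\sigma(j)):j\in I^\sigma_{ee}\}$, and equality follows because both sets have $r$ elements; the same argument applied to $A^\sigma_{eo}$ produces the second identity.

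The step I expect to be most delicate is the parity bookkeeping in the \emph{only if} direction, in particular verifying that the matched basis vector lands in $A^\sigma_{ee}$ rather than $A^\sigma_{oe}$ (and dually in $A^\sigma_{oe}$ rather than $A^\sigma_{ee}$ when treating $A^\sigma_{eo}$). A clean way to organize, and to cross-check, is a graph reformulation: encode each vector as the oriented edge $\lceil j\rceil\to\lceil\sigma(j)\rceil$ of a bipartite multigraph $G_\sigma$ on the $x$- and $z$-indices. This $G_\sigma$ is $2$-regular with $2m$ edges, so $\dim A_\sigma=2m-c(G_\sigma)$ with $c(G_\sigma)\le m$, and $\dim A_\sigma=m$ holds exactly when every component is a double edge, i.e. when $\sigma$ maps each consecutive pair $\{2p-1,2p\}$ onto a consecutive pair—which is once more equivalent to the two set-equalities.
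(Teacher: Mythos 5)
Your proof is correct, and its core---the observation that the $m$ vectors of $A^{\sigma}_{ee}\cup A^{\sigma}_{oe}$ have pairwise distinct $z$-indices, hence are linearly independent, combined with the question of whether the vectors of $A^{\sigma}_{eo}\cup A^{\sigma}_{oo}$ lie in their span---is exactly the paper's. The difference is in execution and in what you add. The paper handles the only-if part contrapositively and in a single sentence: if one of the set-equalities fails, \emph{some} element of $A^{\sigma}_{eo}\cup A^{\sigma}_{oo}$ escapes the span of $A^{\sigma}_{ee}\cup A^{\sigma}_{oe}$; it neither exhibits that element nor does any coefficient bookkeeping. You run the implication directly: assuming $\dim A_{\sigma}=m$, you expand each $v=x_{j+1}-z_{\sigma(j)+1}\in A^{\sigma}_{oo}$ in the basis $A^{\sigma}_{ee}\cup A^{\sigma}_{oe}$, and the distinctness and exhaustiveness of the $z$-indices force $v$ to equal the unique basis vector with $z$-index $\sigma(j)+1$; your parity check then places that vector in $A^{\sigma}_{ee}$ rather than $A^{\sigma}_{oe}$ (the alternative would force the underlying index $j'$ to equal $j$ and hence $\sigma(j)=\sigma(j)+1$, absurd), and the cardinality count $\#I^{\sigma}_{ee}=\#I^{\sigma}_{oo}=r$ upgrades the inclusion of pair-sets to equality. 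This is precisely the detail the paper asserts without proof, so your write-up is, if anything, more complete. Your closing graph-theoretic reformulation is a genuinely different argument: writing $\dim A_{\sigma}=2m-c(G_{\sigma})$ for the $2$-regular bipartite multigraph $G_{\sigma}$ with edges $\lceil j\rceil\to\lceil\sigma(j)\rceil$ re-derives the lower bound $\dim A_{\sigma}\ge m$ and characterizes equality (every component a double edge, i.e.\ $\sigma$ maps each consecutive pair $\{2p-1,2p\}$ onto a consecutive pair) in one stroke; it is more conceptual and makes the combinatorial meaning of $\mathscr{P}_0$, and the count $\#\mathscr{P}_0=m!\,2^m$, transparent, at the cost of invoking the incidence-matrix rank formula.
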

\begin{proof}  It suffices to show the only if part. Note that the $m$ elements in $A^{\sigma}_{ee}\cup A^{\sigma}_{oe}$ are linearly independent. If one of the two condition fails, then there must exist an element in $A^{\sigma}_{eo} \cup A^{\sigma}_{oo}$ such that it does not belong to the space spanned by $A^{\sigma}_{ee}\cup A^{\sigma}_{oe}$. This implies that the dimension of $A_{\sigma}$ is greater than $m$. 
\end{proof}

Let $\mathscr{P}_0=\{\sigma\in\mathscr{P}:\; \text{the dimension of}\; A_{\sigma}\; \text{is}\; m\}$ and $\mathscr{P}_1=\mathscr{P}-\mathscr{P}_0$. Lemma \ref{permutation} implies 
\[
\#\mathscr{P}_0=\sum^m_{r=0}\binom {m} {r} m! =m!\, 2^m.
\]

\subsection{Convergence of even moments} 
We will show the convergence of all even moments. Recall that 
\[
 \E\big[F_{n}(t_1,t_2)\big]^{2m} =(2m)!\, n^{m(2+Hd)}  \sum_{\sigma\in\mathscr{P}}\E \Big[
           \int_{D} \prod_{j=1}^{2m}f\big(n^{H}B^{H,1}_{u_j}-n^{H}B^{H,2}_{v_{\sigma(j)}}\big)\, du\, dv \Big],
\]
where 
\[
D=\Big\{0=u_0<u_{1}<u_{2}<\cdots<u_{2m}<t_1,\, 0=v_0<v_{1}<v_{2}<\cdots<v_{2m}<t_2\Big\}.
\]
Note that we can find a sequence of functions $f_N$,  which are
infinitely differentiable with compact support, such that
$\int_{\R^d} f_N(x)\, dx=0$ and
\[
      \lim_{N\to \infty} \int_{\R^d}  |f(x)-f_N(x)| \left(|x|^{\frac 2H -d} \vee1 \right)  dx=0.
\]
Therefore, by Proposition \ref{tight},  we can assume that
$f$ is infinitely differentiable with compact support and
$\int_{\R^d} f (x)\, dx=0$. 

We first show that permutations in $\mathcal{P}_1$ do not contribute to the limit of even moments using Lemmas \ref{epsilon} and \ref{K}.
\begin{proposition} \label{p1} For any $\si\in\mathscr{P}_1$,
\begin{equation} \label{permut}
\lim\limits_{n\to\infty} 
n^{m(2+Hd)}\E \Big[ \int_{D} \prod_{j=1}^{2m}f\big(n^{H}B^{H,1}_{u_j}-n^{H}B^{H,2}_{v_{\sigma(j)}}\big)\, du\, dv \Big]=0.
\end{equation}
\end{proposition}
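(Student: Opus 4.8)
The plan is to fix $\sigma\in\mathscr{P}_1$ and show that the broken pairing forced by $\dim A_\sigma\ge m+1$ makes the $\sigma$-term behave like the unpaired factor in the odd-moment estimate of Proposition~\ref{odd}, and hence decay. Since we have already reduced to $f$ smooth with compact support and $\int_{\R^d}f=0$, the Fourier representation is available throughout. Starting from the decomposition \eref{dcmp}, the left-hand side of \eref{permut} equals $n^{mHd}$ times the expectation in \eref{dcmp} over the domain $D_n$ of \eref{Dn}. By Lemma~\ref{K} (which applies to this $\sigma$) the contribution of $\cup_\ell D^n_{K,\ell}$ is negligible once we let $K\to\infty$, and, after the Cauchy--Schwarz symmetrization to the $v_j$ integral used in the proofs of Lemmas~\ref{half} and \ref{K}, Lemma~\ref{half} removes the contribution of $\cup_k O_{2m,k}$. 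Thus, up to an error that vanishes as first $n\to\infty$ and then $K\to\infty$, I may restrict \eref{dcmp} to the subdomain of $D_n$ on which every rescaled short spacing $w_{2k-1}$ and $s_{2k-1}$ is dominated by the neighbouring long spacing and bounded by $K$.

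On this truncated domain I would read off the asymptotic structure of the integrand in \eref{dcmp}. Each factor $f(n^H[\cdots])$ splits into a large-scale part assembled only from the even-indexed times, namely the Gaussian vectors $B^{H,1}_{w_{2k}}$ and $B^{H,2}_{s_{2k}}$, and short-increment corrections of the form $n^H[B^{H,1}_{w_{j+1}-w_j/n}-B^{H,1}_{w_{j+1}}]$, which are increments over intervals of length $w_j/n\le K/n$ and hence remain $O(1)$ and asymptotically independent across $k$. Identifying $B^{H,1}_{w_{2k}}$ with $x_{2k}$ and $B^{H,2}_{s_{2k}}$ with $z_{2k}$, the large-scale arguments of the $2m$ factors are precisely the elements of $A_\sigma=A^\sigma_{ee}\cup A^\sigma_{oe}\cup A^\sigma_{eo}\cup A^\sigma_{oo}$. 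Since $\sigma\in\mathscr{P}_1$, Lemma~\ref{permutation} guarantees $\dim A_\sigma\ge m+1$, so there is a distinguished index $j_0\in I^\sigma_{eo}\cup I^\sigma_{oo}$ whose large-scale argument is linearly independent of the $m$ independent elements of $A^\sigma_{ee}\cup A^\sigma_{oe}$.

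Finally I would estimate, using the Fourier representation behind \eref{Phi} together with the local nondeterminism bound \eref{eq1} to control the Gaussian density. The key step is to integrate out the time and frequency variables attached to the distinguished factor $j_0$ first: because its large-scale direction is genuinely independent of the others, the resulting one-factor integral is exactly of the type appearing in Proposition~\ref{odd}, and by $\int_{\R^d}f=0$ together with Lemmas~\ref{lema2} and \ref{lema3} it is bounded by a constant times $n^{(Hd-2)/2}\int_{\R^d}|f(y)||y|^{\frac2H-d}\,dy$, which tends to $0$ since $Hd<2$. The remaining $2m-1$ factors are controlled, after a Cauchy--Schwarz splitting as in the proof of Proposition~\ref{tight} (cf.\ the pairing estimate \eref{t.e.5}), by a bound uniform in $n$. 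Multiplying, the $\sigma$-term is $O(n^{(Hd-2)/2})$ uniformly in $K$, and letting $n\to\infty$ and then $K\to\infty$ yields \eref{permut}.

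I expect the main difficulty to lie in this last step: the short increments and the large-scale arguments are entangled inside each $f$, so cleanly isolating the distinguished factor $j_0$ while keeping the other $2m-1$ factors bounded and the dependence on $K$ uniform requires the careful change-of-variable and Cauchy--Schwarz bookkeeping developed in the proof of Proposition~\ref{tight} above and in Propositions~3.3 and 3.4 of \cite{hnx}. In particular, one must verify that the extra independent direction produced by $\dim A_\sigma>m$ genuinely contributes a negative power of $n$ rather than being absorbed by the Jacobian factor $n^{mHd}$ coming from the rescaling; the heuristic rate $n^{(Hd-2)/2}$ from the unpaired-factor computation in Proposition~\ref{odd} is what makes this plausible.
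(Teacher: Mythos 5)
Your reductions and your identification of a distinguished index $j_0\in I^{\sigma}_{eo}\cup I^{\sigma}_{oo}$ via Lemma \ref{permutation} are in the spirit of the paper, but the step you yourself flag as the key one is where the argument genuinely fails: the distinguished factor cannot be treated as ``an unpaired factor of Proposition \ref{odd} type,'' and no bound of order $n^{(Hd-2)/2}$ can be extracted that way. The unpaired-factor estimate lives entirely in the symmetrized setting: after the Cauchy--Schwarz step both families of times carry the \emph{same} ordering, the change of variables $\eta_i=\sum_{j\ge i}\xi_j$, $u_i=w_i-w_{i-1}$, $v_i=s_i-s_{i-1}$ turns the exponential into a product over independent Gaussian vectors $X_i$, and a singleton factor exists only because the number of factors is odd (the last oscillatory term couples $\eta_p$ to $X_{p+1}=0$). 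If you symmetrize the $\sigma$-term by Cauchy--Schwarz, the permutation is erased --- $\Var\big(\sum_j\xi_j\cdot B^{H,2}_{v_{\sigma(j)}}\big)$ is replaced by its ordered version --- and all $2m$ oscillatory factors pair perfectly, so you recover exactly the $O(1)$ bound of Proposition \ref{tight}, which is uniform over \emph{all} $\sigma\in\mathscr{P}$ and therefore can never distinguish $\mathscr{P}_1$ from $\mathscr{P}_0$, let alone produce decay. If instead you keep the $\sigma$-dependence and do not symmetrize, then the local-nondeterminism lower bound for the $v$-part involves the frequency combinations $\sum_{j:\,\sigma(j)\ge i}\xi_j$ while the $u$-part involves $\sum_{j\ge i}\xi_j$; these cannot be diagonalized simultaneously, the independent-Gaussian representation on which Lemmas \ref{lema2} and \ref{lema3} rest is unavailable, and there is no ``one-factor integral'' to isolate. (Your bookkeeping also does not close: the remaining $2m-1$ factors are odd in number, so they cannot all be paired.)

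The linear independence coming from the fact that $A_{\sigma}$ has dimension at least $m+1$ must instead be cashed in through the Gaussian \emph{density}, which is what the paper does. One keeps precisely the $m+1$ factors indexed by $\{j_0\}\cup I^{\sigma}_{ee}\cup I^{\sigma}_{oe}$, whose large-scale arguments are linearly independent, bounds the remaining $m-1$ factors by $\sup|f|<\infty$, and writes the retained expectation using the joint density $p_n(x,y)$ of those large-scale vectors and the short increments. Rescaling $x\mapsto x/n^H$ in the $m+1$ retained arguments costs $n^{-(m+1)Hd}$, whereas the time change of variables only supplies $n^{mHd}$, so the $\sigma$-term is bounded by
\begin{equation*}
c\, n^{-Hd}\int_{\widehat{D}^n_{K,\epsilon}}\int \sup_{x}\, p_n\big(\tfrac{x}{n^H},y\big)\, dy\, dw\, ds ,
\end{equation*}
and a Schur-complement computation gives $\int\sup_x p_n(x/n^H,y)\,dy\le c\,(\det A(w,s))^{-1/2}$, where $A(w,s)$ is the covariance matrix of the retained large-scale vectors, nonsingular precisely because $\sigma\in\mathscr{P}_1$. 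For this determinant to be bounded below uniformly one needs the even-indexed times to stay $\epsilon$-separated, i.e.\ the truncation furnished by Lemma \ref{epsilon} --- the one truncation your reduction omits (Lemma \ref{half}, which you invoke instead, enters the paper only inside the proof of Lemma \ref{K}). With that domain, as in \eref{Dnk}, the $\sigma$-term is $O(n^{-Hd})$ for each fixed $\epsilon$ and $K$, and Lemmas \ref{epsilon} and \ref{K} dispose of the complementary regions; this, rather than any odd-moment mechanism, is what makes the extra independent direction produce a negative power of $n$.
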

\begin{proof}  For any $\epsilon>0$ and $K>0$, define
\[
D^n_{K,\epsilon}=D\cap\big\{ u_{2\ell}-u_{2\ell-1}<K/n,\; v_{2\ell}-v_{2\ell-1}<K/n,\; u_{2\ell}-u_{2\ell-2}\geq\epsilon,\; v_{2\ell}-v_{2\ell-2}\geq\epsilon,\; \ell=1,2,\dots, m\big\}.
\]
Thanks to Lemmas \ref{epsilon} and \ref{K}, we can replace $D$ in \eref{permut} with $D^n_{K,\epsilon}$. 

Recall the equality in \eref{dcmp}. Making proper change of variables gives
\begin{align*}
& n^{m(2+Hd)} \E \Big[\int_{D^n_{K,\epsilon}} \prod_{j=1}^{2m}f\big(n^{H}B^{H,1}_{u_j}-n^{H}B^{H,2}_{v_{\sigma(j)}}\big)\, du\, dv \Big] \notag \\
=&\E \Big\{ \int_{\widehat{D}^n_{K,\epsilon}}  \prod_{j\in I^{\sigma}_{ee}}
             f\big(n^H [B^{H,1}_{w_j}- B^{H,2}_{s_{\sigma(j)}}]\big) \prod_{j\in I^{\sigma}_{eo}}f\big(n^H[B^{H,1}_{w_j}- B^{H,2}_{s_{\sigma(j)+1}}]-n^H[B^{H,2}_{s_{\sigma(j)+1}-\frac{s_{\sigma(j)}}{n}}-B^{H,2}_{s_{\sigma(j)+1}}]\big)\notag\\
     &\quad\times \prod_{j\in I^{\sigma}_{oe}}  f\big(n^H[B^{H,1}_{w_{j+1}}- B^{H,2}_{s_{\sigma(j)}}]+n^H[B^{H,1}_{w_{j+1}-\frac{w_j}{n}}-B^{H,1}_{w_{j+1}}]\big)\notag \\
     &\qquad\times  \prod_{j\in I^{\sigma}_{oo}}f\Big(n^H [B^{H,1}_{w_{j+1}}-B^{H,2}_{s_{\sigma(j)+1}}] +n^H[B^{H,1}_{w_{j+1}-\frac{w_j}{n}}-B^{H,1}_{w_{j+1}}]-n^H[B^{H,2}_{s_{\sigma(j)+1}-\frac{s_{\sigma(j)}}{n}}-B^{H,2}_{s_{\sigma(j)+1}}]\Big)\,dw \, ds\Big\},
 \end{align*}
where
\begin{align}
\widehat{D}^n_{K,\epsilon} \label{Dnk}
&=\Big\{w,s\in \R^{2m}: \epsilon<w_{2}<w_4<\dots<w_{2m}<t_1,\; \epsilon<s_{2}<s_4<\dots<s_{2m}<t_2, \notag\\
&\qquad\qquad\qquad 0<w_{2k-1}<K\wedge n(w_{2k}-w_{2k-2}),\; 0<s_{2k-1}<K\wedge n(s_{2k}-s_{2k-2}),\; \notag\\
&\qquad\qquad\qquad\qquad\qquad\qquad\qquad w_{2k}-w_{2k-2}\geq \epsilon,\; s_{2k}-s_{2k-2}\geq \epsilon,\; k=2,\dots,m\Big\}.
\end{align}

Since $\si\in\mathcal{P}_1$, there exists a $j_0\in I^{\sigma}_{eo}\cup I^{\sigma}_{oo}$ such that $j_0\notin I^{\sigma}_{ee}\cup I^{\sigma}_{oe}$. Without loss of generality, we can assume that $j_0\in I^{\sigma}_{eo}$. Let $p_n(x,y)$ be the density function of the random field $Z_n=(X_j, Y^n_j)$ where
\begin{equation} \label{X}
X_j
=\begin{cases} 
B^{H,1}_{w_{j_0}}- B^{H,2}_{s_{\sigma(j_0)+1}} & \hbox{if $j=j_0$}\\
B^{H,1}_{w_j}- B^{H,2}_{s_{\sigma(j)}} & \hbox{if $j\in I^{\sigma}_{ee}$}\\
B^{H,1}_{w_{j+1}}- B^{H,2}_{s_{\sigma(j)}} & \hbox{if $j\in I^{\sigma}_{oe}$}
\end{cases} 
\end{equation}
and
\[
Y^n_j
=\begin{cases} 
n^H[B^{H,2}_{s_{\sigma(j_0)+1}-\frac{s_{\sigma(j_0)}}{n}}-B^{H,2}_{s_{\sigma(j_0)+1}}]
& \hbox{if $j=j_0$}\\
n^H[B^{H,1}_{w_{j+1}-\frac{w_j}{n}}-B^{H,1}_{w_{j+1}}] & \hbox{if $j\in I^{\sigma}_{oe}$}.
\end{cases} 
\]

Since $f$ is infinitely differentiable and had compact support,
\begin{align*}
&n^{m(2+Hd)}\Big|\E \Big[
           \int_{D^n_{K,\epsilon}} \prod_{j=1}^{2m}f\big(n^{H}B^{H,1}_{u_j}-n^{H}B^{H,2}_{v_{\sigma(j)}}\big)\, du\, dv \Big]\Big|\\
&\leq c_1\, n^{mHd}  \E \Big[
            \int_{\widehat{D}^n_{K,\epsilon}} \Big| f\Big(n^H[B^{H,1}_{w_{j_0}}- B^{H,2}_{s_{\sigma(j_0)+1}}]-n^H[B^{H,2}_{s_{\sigma(j_0)+1}-\frac{s_{\sigma(j_0)}}{n}}-B^{H,2}_{s_{\sigma(j_0)+1}}]\Big)\Big|\\
     &  \quad\times             \prod_{j\in I^{\sigma}_{ee}} \Big| f\Big(n^H [B^{H,1}_{w_j}- B^{H,2}_{s_{\sigma(j)}}]\Big)\Big|
 \prod_{j\in I^{\sigma}_{oe}} \Big| f\Big(n^H[B^{H,1}_{w_{j+1}}- B^2_{s_{\sigma(j)}}]+n^H[B^{H,1}_{w_{j+1}-\frac{w_j}{n}}-B^{H,1}_{w_{j+1}}]\Big)\Big| \,dw \, ds  \Big]\\
&= c_2\, n^{mHd}\int_{\widehat{D}^n_{K,\epsilon}}\int_{\R^{(m+2+\#I^{\sigma}_{oe})d}} 
\big| f(n^H x_{j_0}+y_{j_0})\big|\prod_{j\in I^{\sigma}_{ee}} \big| f(n^Hx_j)\big| \prod_{j\in I^{\sigma}_{oe}} \big| f(n^Hx_j+y_j)\big|\, p_n(x,y)\, dx\, dy \,dw \, ds\\
&=c_2\, n^{-Hd}\int_{\widehat{D}^n_{K,\epsilon}} \int_{\R^{(m+2+\#I^{\sigma}_{oe})d}} 
\big| f(x_{j_0}+y_{j_0})\big|\prod_{j\in I^{\sigma}_{ee}} \big| f(x_j)\big| \prod_{j\in I^{\sigma}_{oe}} \big| f(x_j+y_j)\big|\, p_n(\frac{x}{n^H},y)\, dx\, dy \,dw \, ds\\
&\leq c_3\, n^{-Hd} \int_{\widehat{D}^n_{K,\epsilon}} \int_{\R^{(1+\#I^{\sigma}_{oe})d}}\sup_{x}p_n(\frac{x}{n^H},y)\,dy\,dw \, ds.                   
\end{align*}

Let $Q_n(w,s)$ be the covariance matrix function of $Z_n=(X_j, Y^n_j)$ defined above. Then $Q_n$ has the following expression 
\[ 
Q_n=\left[ \begin{array}{cc}
A & C^T_n\\
C_n &  B_n
\end{array} \right],
\] 
where $A=A(w,s)$ is the covariance matrix function of $X=(X_j)$, $C_n=C_n(w,s)$ the covariance matrix function of $X=(X_j)$ and $Y=(Y_j)$, and $B_n=B_n(w,s)$ the covariance matrix function of $Y=(Y_j)$.

After doing some algebra, we have
\[ 
Q^{-1}_n=\left[ \begin{array}{cc}
(A-C^T_n B^{-1}_n C_n)^{-1} & -A^{-1}C^T_n(B_n-C_nA^{-1}C^T_n)^{-1}\\
-B_n^{-1}C_n(A-C^T_nB^{-1}_nC_n)^{-1} &  (B_n-C_nA^{-1}C^T_n)^{-1}
\end{array} \right],
\] 
and $\det(Q_n)=\det(A)\det(B_n-C_nA^{-1}C^T_n)$. For simplicity of notation, we write
\[ 
Q^{-1}_n=\left[ \begin{array}{cc}
D_1 & D^T_2\\
D_2 & D_4
\end{array} \right].
\] 
Note that 
\begin{align*}
(x,y)Q_n^{-1}(x,y)^T
&=xD_1x^T+xD^T_2y^T+yD_2x^T+yD_4y^T\\
&= xD_1x^T+2xD^T_2y^T+yD_4y^T\\
&=( x\sqrt{D_1})( x\sqrt{D_1})^T+2( x\sqrt{D_1})(\sqrt{D_1})^{-1}D^T_2y^T+yD_4y^T\\
&\geq y(D_4-D_2D^{-1}_1D^T_2)y^T.
\end{align*}
Then
\begin{align*}
\int_{\R^{(1+\#I^{\sigma}_{oe})d}}\sup_{x}p_n(\frac{x}{n^H},y)\,dy
&=\int_{\R^{(1+\#I^{\sigma}_{oe})d}}\sup_{x}p_n(x,y)\,dy\\
&=c_4\int_{\R^{(1+\#I^{\sigma}_{oe})d}}(\det(Q_n))^{-\frac{1}{2}}\sup_{x}\exp\Big\{-\frac{1}{2}(x,y)Q_n^{-1}(x,y)^T\Big\}\,dy\\
&\leq c_4 \int_{\R^{(1+\#I^{\sigma}_{oe})d}} (\det(Q_n))^{-\frac{1}{2}}\exp\Big\{-\frac{1}{2}y\big(D_4-D_2D^{-1}_1D^T_2\big)y^T\Big\}\, dy\\
&=c_5(\det(Q_n))^{-\frac{1}{2}} \big(\det(D_4-D_2D^{-1}_1D^T_2)\big)^{\frac{1}{2}}\\
&=c_5(\det(A))^{-\frac{1}{2}}.
\end{align*}
Therefore, 
\[
n^{m(2+Hd)}\Big|\E \Big[
           \int_{D^n_{K,\epsilon}} \prod_{j=1}^{2m}f\big(n^{H}B^{H,1}_{u_j}-n^{H}B^{H,2}_{v_{\sigma(j)}}\big)\, du\, dv \Big]\Big|
\leq c_6\, n^{-Hd} \int_{\widehat{D}^n_{K,\epsilon}} (\det(A(w,s)))^{-\frac{1}{2}}\,dw \, ds.                   
\]

From the definition of $X$ in \eref{X}, we see that the components of $X$ are linearly independent and thus $A(w,s)$ is not singular. Taking into account the definition of $\widehat{D}^n_{K,\epsilon}$ in \eref{Dnk} and the continuity of $\det(A(w,s))$, we obtain $\det(A(w,s))\geq c_{\epsilon}>0$ for all $(w,s)$ in $\widehat{D}^n_{K,\epsilon}$. Therefore,
\[
n^{m(2+Hd)}\Big|\E \Big[
           \int_{D^n_{K,\epsilon}} \prod_{j=1}^{2m}f\big(n^{H}B^{H,1}_{u_j}-n^{H}B^{H,2}_{v_{\sigma(j)}}\big)\, du\, dv \Big]\Big|
\leq c_7\, c^{-\frac{1}{2}}_{\epsilon}n^{-Hd}.                   
\]
This completes the proof. \end{proof}

\medskip

We next show the convergence of all even moments.
\begin{proposition} \label{even} For any $m\in\N$,
\[
\lim\limits_{n\to\infty} \E\big[F_n(t_1,t_2)\big]^{2m}=D^m_{H,d}\, \|f\|^m_{\frac{2}{H}-d}\, \E\big[\sqrt{\alpha(0,t_1,t_2)}\, \zeta  \big]^{2m}.
\]
\end{proposition}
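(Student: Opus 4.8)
The plan is to compute the limit by retaining only the permutations in $\mathscr{P}_0$ and analyzing each of them through the macroscopic/microscopic splitting already set up in \eref{dcmp}. By Proposition \ref{p1} the permutations in $\mathscr{P}_1$ contribute nothing, and by the reduction recorded just before Proposition \ref{p1} we may assume that $f$ is smooth with compact support and $\int_{\R^d}f=0$. Fix $\sigma\in\mathscr{P}_0$. Using Lemmas \ref{epsilon}, \ref{half} and \ref{K} I would replace the domain $D$ by the good domain $D^n_{K,\epsilon}$, on which the short gaps satisfy $u_{2k}-u_{2k-1}<K/n$ and $v_{2k}-v_{2k-1}<K/n$, the macroscopic separations satisfy $u_{2k}-u_{2k-2}\ge\epsilon$ and $v_{2k}-v_{2k-2}\ge\epsilon$, and each gap is at most half the corresponding separation. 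After the change of variables $w_{2k}=u_{2k}$, $w_{2k-1}=n(u_{2k}-u_{2k-1})$, $s_{2k}=v_{2k}$, $s_{2k-1}=n(v_{2k}-v_{2k-1})$ (Jacobian $n^{-2m}$, so $n^{m(2+Hd)}$ becomes $n^{mHd}$), the even-indexed $w_{2k},s_{2\ell}$ are macroscopic and $\epsilon$-separated while the odd-indexed $w_{2k-1},s_{2\ell-1}\in(0,K)$ are microscopic.

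On this domain I would let $n\to\infty$, then $K\to\infty$, then $\epsilon\to0$. Write $Y_k=B^{H,1}_{w_{2k}}-B^{H,2}_{s_{2\pi(k)}}$ for the macroscopic difference attached to the $k$-th block, where $\pi\in S_m$ is the block permutation induced by $\sigma$. The pairing identities of Lemma \ref{permutation} force, within each block, the two factors $f(\cdot)$ to share the same macroscopic part $n^HY_k$ and to differ only by microscopic increments of size $n^H$ built from $B^{H,1}_{w_{2k}-w_{2k-1}/n}-B^{H,1}_{w_{2k}}$ and $B^{H,2}_{s_{2\ell}-s_{2\ell-1}/n}-B^{H,2}_{s_{2\ell}}$; the block integrand is $f(n^HY_k)\,f(n^HY_k+\Xi^{(1)}_k-\Xi^{(2)}_k)$ for an order-preserving block and $f(n^HY_k+\Xi^{(1)}_k)\,f(n^HY_k-\Xi^{(2)}_k)$ for an order-reversing one. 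Self-similarity and stationarity of increments give $\Xi^{(1)}_k\Rightarrow N(0,w_{2k-1}^{2H}I_d)$ and $\Xi^{(2)}_k\Rightarrow N(0,s_{2\ell-1}^{2H}I_d)$, and the local nondeterminism of Lemma \ref{lnd}, using the $\epsilon$-separation of the macroscopic times against the $O(1/n)$ widths, makes these increments asymptotically independent of one another and of the Gaussian vector $(Y_k)_{k=1}^m$. Substituting $x_k=n^HY_k$, the macroscopic density converges to $p_{(Y)}(0)=(2\pi)^{-md/2}(\det A)^{-1/2}$ (with $A$ the covariance of $(Y_k)$), the $n^{mHd}$ prefactor is exactly absorbed, and the microscopic part factorizes over blocks.

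For a single block the microscopic factor becomes, after integrating the two $f$'s against the Gaussian law of the increments and then over $(\rho,\tau)\in(0,\infty)^2$,
\begin{equation*}
\int_{\R^{2d}}f(x)f(y)\Big[\int_0^\infty\!\!\int_0^\infty\big(q_{\rho^{2H}+\tau^{2H}}(y-x)-q_{\rho^{2H}+\tau^{2H}}(0)\big)\,d\rho\,d\tau\Big]dx\,dy,
\end{equation*}
where $q_{\sigma^2}$ is the centered Gaussian density with variance $\sigma^2$ per coordinate. Here the subtraction of $q(0)$ is legitimate precisely because $\int f=0$, and it is also what renders the $(\rho,\tau)$-integral convergent (near the origin this is controlled by $Hd$, at infinity by the standing assumption $H>\tfrac{2}{d+2}$ that makes $D_{H,d}<\infty$). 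The rescaling $\rho=|y-x|^{1/H}u$, $\tau=|y-x|^{1/H}v$ turns the bracket into $-\tfrac14 D_{H,d}\,|y-x|^{\frac2H-d}$, so the block factor equals $\tfrac14 D_{H,d}\|f\|^2_{\frac2H-d}$; a short computation shows the order-reversing block gives the same value. Hence $L_\sigma$ depends only on $\pi$ and equals $(2\pi)^{-md/2}\big(\tfrac14 D_{H,d}\|f\|^2_{\frac2H-d}\big)^m\int_{a\uparrow}\int_{b\uparrow}(\det A^{(\pi)})^{-1/2}\,da\,db$, where $a\uparrow,b\uparrow$ denote the ordered macroscopic times and $A^{(\pi)}$ is the covariance of $(B^{H,1}_{a_k}-B^{H,2}_{b_{\pi(k)}})_k$.

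It remains to assemble the constants. Each $\pi\in S_m$ carries $2^m$ orientations, all giving $L_\pi$, and summing the ordered $b$-integral over $\pi$ reconstitutes the unordered integral, so $\sum_{\pi}\int_{a\uparrow}\int_{b\uparrow}(\det A^{(\pi)})^{-1/2}=\frac1{m!}\int_{E^m}(\det A)^{-1/2}du\,dv$. Therefore
\begin{equation*}
\lim_{n\to\infty}\E[F_n(t_1,t_2)]^{2m}=(2m)!\sum_{\sigma\in\mathscr{P}_0}L_\sigma=(2m)!\,\frac{2^m}{m!}\Big(\tfrac14 D_{H,d}\|f\|^2_{\frac2H-d}\Big)^m(2\pi)^{-\frac{md}2}\int_{E^m}(\det A)^{-\frac12},
\end{equation*}
and the identity $(2m)!\,2^m/(4^m m!)=(2m)!/(2^m m!)=(2m-1)!!$ collapses this, via Lemma \ref{lema2.1}, to $D_{H,d}^m\,\|f\|^{2m}_{\frac2H-d}\,\E[\sqrt{\alpha(t_1,t_2)}\,\zeta]^{2m}$, which matches the asserted value and Theorem \ref{thm1} (the correct exponent on $\|f\|$ being $2m$). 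I expect the main obstacle to be the rigorous justification of the factorization in the second paragraph — the asymptotic independence of the microscopic increments from each other and from $(Y_k)$, and the interchange of limit and integral — which I would establish from the local nondeterminism estimate of Lemma \ref{lnd} on the $\epsilon$-separated domain (where $A$ stays nonsingular) together with dominated convergence based on the uniform bound of Proposition \ref{tight} and the three cut-off Lemmas \ref{epsilon}, \ref{half} and \ref{K}.
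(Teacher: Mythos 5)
Your proposal is correct and takes essentially the same approach as the paper: discard $\mathscr{P}_1$ via Proposition \ref{p1}, apply the macroscopic/microscopic decomposition \eref{dcmp} together with Lemma \ref{permutation} for $\sigma\in\mathscr{P}_0$, pass to the limit in the Gaussian density (asymptotic independence of the microscopic increments, limit covariance $A^{\sigma}$), use $\int_{\R^d}f=0$ to subtract the density at zero and obtain the per-block factor $\tfrac{1}{4}D_{H,d}\,\|f\|^2_{\frac{2}{H}-d}$ by the scaling computation, and assemble with the count $\#\mathscr{P}_0=m!\,2^m$ and Lemma \ref{lema2.1}, exactly as in the paper's Steps 1--3. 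The only differences are organizational---where you invoke the $\epsilon$-separation of Lemma \ref{epsilon} plus dominated convergence to justify the interchange of limit and integral, the paper instead proves a uniform $L^p$ bound ($1\le p<\tfrac{2}{Hd}$) on $p_n(x/n^H,y)$ over $D_{n,K}$ via local nondeterminism and treats $D_n\setminus D_{n,K}$ with Lemma \ref{K}---and your observation that the exponent on $\|f\|_{\frac{2}{H}-d}$ should be $2m$ rather than $m$ correctly identifies a typo in the paper's statement.
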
 
\begin{proof} By Proposition \ref{p1} and equation \eref{moment}, we only need to show 
\begin{align} \label{moment1}
&\lim\limits_{n\to\infty} (2m)!\, n^{m(2+Hd)}  \sum_{\sigma\in\mathscr{P}_0}\E \Big[
           \int_{D} \prod_{j=1}^{2m}f\big(n^{H}B^{H,1}_{u_j}-n^{H}B^{H,2}_{v_{\sigma(j)}}\big)\, du\, dv \Big] \notag\\
          &\qquad\qquad\qquad\qquad\qquad\qquad\qquad =D^m_{H,d}\, \|f\|^m_{\frac{2}{H}-d}\, \E\big[\sqrt{\alpha(0,t_1,t_2)}\, \zeta \big]^{2m}. 
\end{align}
The proof will be done in several steps.

\medskip \noindent
\textbf{Step 1} \quad Since $\sigma\in\mathscr{P}_0$, by Lemma \ref{permutation}, the equation \eref{dcmp} can be written as
\begin{align*}
     & n^{2m} \E \Big[
           \int_{D} \prod_{j=1}^{2m}f\big(n^{H}B^{H,1}_{u_j}-n^{H}B^{H,2}_{v_{\sigma(j)}}\big)\, du\, dv \Big] \\
    &= \E \Big[
            \int_{D_n}  \prod_{j\in I^{\sigma}_{ee}}
             f\big(n^H [B^{H,1}_{w_j}- B^{H,2}_{s_{\sigma(j)}}]\big) \prod_{j\in I^{\sigma}_{eo}}f\big(n^H[B^{H,1}_{w_j}- B^{H,2}_{s_{\sigma(j)+1}}]-n^H[B^{H,2}_{s_{\sigma(j)+1}-\frac{s_{\sigma(j)}}{n}}-B^{H,2}_{s_{\sigma(j)+1}}]\big)\\
     &  \quad\times \prod_{j\in I^{\sigma}_{eo}}  f\big(n^H[B^{H,1}_{w_{j}}- B^{H,2}_{s_{\sigma(j)+1}}]+n^H[B^{H,1}_{w_{j}-\frac{w_{j-1}}{n}}-B^{H,1}_{w_{j}}]\big)\\
     &\qquad\times  \prod_{j\in I^{\sigma}_{ee}}f\big(n^H [B^{H,1}_{w_{j}}-B^{H,2}_{s_{\sigma(j)}}] +n^H[B^{H,1}_{w_{j}-\frac{w_{j-1}}{n}}-B^{H,1}_{w_{j}}]-n^H[B^{H,2}_{s_{\sigma(j)}-\frac{s_{\sigma(j)-1}}{n}}-B^{H,2}_{s_{\sigma(j)}}]\big)\,dw \, ds  \Big].
\end{align*}

We introduce random fields $X^n(w,s)=\big\{X^n_{j}(w,s): j\in I_{e}\big\}$ and $Y^n(w,s)=\big\{Y^n_j(w,s): j\in I_{e}\big\}$ with
\[
X^n_j(w,s)=
\begin{cases} 
B^{H,1}_{w_j}- B^{H,2}_{s_{\sigma(j)}}, & \quad \hbox{if $j\in I^{\sigma}_{ee}$},\\
\\
[B^{H,1}_{w_j}- B^{H,2}_{s_{\sigma(j)+1}}]-[B^{H,2}_{s_{\sigma(j)+1}-\frac{s_{\sigma(j)}}{n}}-B^{H,2}_{s_{\sigma(j)+1}}], & \quad \hbox{if $j\in I^{\sigma}_{eo}$},
\end{cases}
\]
and
\[
Y^n_j(w,s)=
\begin{cases} 
n^H[B^{H,1}_{w_{j}-\frac{w_{j-1}}{n}}-B^{H,1}_{w_{j}}]-n^H[B^{H,2}_{s_{\sigma(j)}-\frac{s_{\sigma(j)-1}}{n}}-B^{H,2}_{s_{\sigma(j)}}], & \quad \hbox{if $j\in I^{\sigma}_{ee}$},\\
\\
n^H[B^{H,1}_{w_{j}-\frac{w_{j-1}}{n}}-B^{H,1}_{w_{j}}]+n^H[B^{H,2}_{s_{\sigma(j)+1}-\frac{s_{\sigma(j)}}{n}}-B^{H,2}_{s_{\sigma(j)+1}}], &\quad \hbox{if $j\in I^{\sigma}_{eo}$}.
\end{cases}
\]

Let $Z_n(w,s)=\big(X^n(w,s), Y^n(w,s)\big)$. Denote the covariance matrix and
the probability density function of the Gaussian random field $Z_n(w,s)$  by $Q_{n}(w,s)$ and
\[
p_{n}(x,y) = (2\pi )^{-md}\big(\det
Q_{n}(w,s)\big)^{-\frac{1}{2}}\exp\Big\{ -\frac{1}{2} (x,y)Q_{n}(w,s)^{-1}(x,y)^{T}\Big\},
\]
respectively. Then
\begin{align}      \label{moment2}
& n^{m(2+Hd)} \E \Big[
           \int_D \prod_{j=1}^{2m}f\big(n^{H}B^{H,1}_{u_j}-n^{H}B^{H,2}_{v_{\sigma(j)}}\big)\, du\, dv \Big] \notag\\     
     & =n^{mHd}\E \Big[
            \int_{D_n} \prod_{j\in I_{e}}f\big(n^H X^n_j(w,s)\big)f\big(n^H X^n_j(w,s)+Y^n_{j}(w,s)\big)\,dw \, ds \big] \notag \\
     &=n^{mHd}\int_{\R^{2md}} \int_{D_n}\prod_{j\in I_{e}}f(n^H x_j)f(n^Hx_j+y_{j-1})\, p_n(x,y)\,dw \, ds\,dx \,dy\notag \\
     &= \int_{\R^{2md}} \int_{D_n}  F(x,y)\, p_n(\frac{x}{n^H},y)\,dw \, ds\, dx\, dy,  
     \end{align}
where $F(x,y)=\prod\limits_{j\in I_{e}} f(x_j )f(x_j +y_{j})$.

We need to compute the limit of the density $p_n(x/n^H,y)$ as $n$ tends to infinity.
The covariance
matrix  between the components of $X^n(w,s)$ and $Y^n(w,s)$ converges to
the zero matrix, and the covariance matrix of the random field  $Y^n(w,s)$  converges to
a diagonal matrix with entries equal to  $w_{j-1}^{2H}+s_{\sigma(j)-1}^{2H}$ when $j\in I^{\sigma}_{ee}$ and  $w_{j-1}^{2H}+s_{\sigma(j)}^{2H}$ when $j\in I^{\sigma}_{eo}$. Let $A^{\sigma}(w,s)$ be the covariance matrix of $X(w,s)=(X_j(w,s): j\in I_e)$ with
\[
X_j(w,s)=
\begin{cases} 
B^{H,1}_{w_j}- B^{H,2}_{s_{\sigma(j)}}, & \quad \hbox{if $j\in I^{\sigma}_{ee}$},\\
\\
B^{H,1}_{w_j}- B^{H,2}_{s_{\sigma(j)+1}}, & \quad \hbox{if $j\in I^{\sigma}_{eo}$}.
\end{cases}
\]
We see that the covariance matrix of the random field $X^n(w,s)$ converges to $A^{\sigma}(w,s)$. Thus,
\begin{align*}
\lim_{n\rightarrow \infty} p_{n}(\frac{x}{n^H},y) &= (2\pi )^{-
md} \big(\det A^{\sigma}(w,s)\big)^{-\frac{1}{2}}\\
&\qquad  \times
\prod_{j\in I_{ee}}  \big(w_{j-1}^{2H}+s_{\sigma(j)-1}^{2H}\big)^{-\frac{d}{2}}
\exp\Big(-\frac{1}{2}\frac{|y_{j}|^2} {w_{j-1}^{2H}+s_{\sigma(j)-1}^{2H} } \Big)\\
&\qquad\qquad \times
\prod_{j\in I_{eo}}  \big(w_{j-1}^{2H}+s_{\sigma(j)}^{2H}\big)^{-\frac{d}{2}}
\exp\Big(-\frac{1}{2}\frac{|y_{j}|^2} {w_{j-1}^{2H}+s_{\sigma(j)}^{2H} } \Big).
 \end{align*}
On the other hand, the region  $D_n$ converges, as $n$
tends to infinity, to
\[
\Big\{ w,s\in\R_+^{2m}:  0<w_2< w_4<\cdots<w_{2m} <t_1,\,   0<s_2< s_4<\cdots<s_{2m} <t_2\Big\}.
\]
Note that we can add a term $-1$ because $\int_{\R^d} F(x,y)\, dy_{j} =0$ for all $j$ in $I_e$ and
\begin{align*}
&\int^{\infty}_0\int_{0}^\infty    (w^{2H}+s^{2H})^{-\frac{d}{2}} \big( e^{ -\frac{1}{2}\frac  { |y_{j}|^2}{w^{2H}+s^{2H}}} -1 \big)\, dw\, ds\\
& =-\, |y_{j}|^{\frac 2H-d} \int_{0}^\infty\int_{0}^\infty
(w^{2H}+s^{2H})^{-\frac{d}{2}}  \big( 1- e^{ -\frac{1}{2}\frac{1} {w^{2H}+s^{2H}} }\big)\, dw\, ds.
\end{align*}
Therefore, provided that we can interchange the limit and the
integrals in the expression \eref{moment2}, we obtain that the limit equals 
\begin{equation} \label{limit}
\frac{D_{H,d}^{m}}{4^m} 
 \| f\|_{\frac 2H-d}^{m}
(2\pi )^{-\frac{md}{2}} \int_{O}  \big(\det A^{\sigma}(w,s)\big)^{-\frac{1}{2}}  dw\, ds,
\end{equation}
where 
\[
O=\Big\{0<w_2< w_4<\cdots<w_{2m} <t_1,\,   0<s_2< s_4<\cdots<s_{2m} <t_2\Big\}.
\]
Finally, the left hand side of \eref{moment1} equals
\begin{align*}  \label{equ3}
&(2m)!\, \sum_{\sigma\in\mathscr{P}_0}\frac{D_{H,d}^{m}}{4^m} 
 \| f\|_{\frac 2H-d}^{m}
(2\pi )^{-md} \int_{O}  (\det A^{\sigma}(w,s))^{-\frac{1}{2}}  dw\, ds \notag\\
&=
(2m-1)!! D_{H,d}^{m}\,
 \| f\|_{\frac 2H-d}^{m} \int_{E^m} (2\pi )^{- \frac{md}2} (\det A (u, v))^{-\frac{1}{2}}  du\, dv,
\end{align*}
and, taking into account of Lemma \ref{lema2.1},  this would finish the proof.

\medskip \noindent
\textbf{Step 2} \quad Recall the notation $D_n$ in \eref{Dn}. Define
\[
      D_{n,K}=D_n\cap \Big\{0< w_{2k-1} <K\wedge n(w_{2k}-w_{2k-2}),\, 0< s_{2k-1} <K\wedge n(s_{2k}-s_{2k-2}),\,1\le k\le m\Big\}.
\]
The region $D_{n,K}$ is uniformly
bounded in $n$ and we can then  interchange the limit and the
integral with respect to $w$ and $s$, provided that we have a uniform
integrability condition.  

Observe that
\begin{align*}
&\int_{D_{n,K}} \big| p_n(\frac{x}{n^H},y)\big|^p\, dw\, ds\\
&\leq c_1\int_{D_{n,K}}\big(\det Q_{n}(w,s)\big)^{-\frac{p}{2}}\, ds\, ds\\
&= c_2\int_{D_{n,K}} \bigg(\int_{\R^{2md}}\exp\Big\{-\frac{1}{2}\Var\Big(\sum_{j\in I_{e}}\xi_j\cdot X^n_{j}(w,s)+\sum_{j\in I_{e}}\eta_j\cdot Y^n_{j}(w,s)\Big)\Big\} d\xi\, d\eta \bigg)^{p}\, dw\, ds
\end{align*}
and
\[
\Var\Big(\sum_{j\in I_{e}}\xi_j\cdot X^n_{j}(w,s)+\sum_{j\in I_{e}}\eta_j\cdot Y^n_{j}(w,s)\Big)=I_1(\xi,\eta)+I_2(\xi,\eta),
\]
where 
\[
I_1(\xi,\eta)=\Var\Big(\sum_{j\in I_{e}}\xi_j\cdot B^{H,1}_{w_j}+\sum_{j\in I_{e}}\eta_j\cdot n^H[B^{H,1}_{w_{j}-\frac{w_{j-1}}{n}}-B^{H,1}_{w_{j}}]\Big)
\]
and
\begin{align*}
I_2(\xi,\eta)
&=\Var\Big(\sum_{j\in I^{\sigma}_{ee}}\xi_j\cdot B^{H,2}_{s_{\sigma(j)}}+\sum_{j\in I^{\sigma}_{eo}}\xi_j\cdot (B^{H,2}_{s_{\sigma(j)+1}}+[B^{H,2}_{s_{\sigma(j)+1}-\frac{s_{\sigma(j)}}{n}}-B^{H,2}_{s_{\sigma(j)+1}}])\\
&\quad\quad+\sum_{j\in I^{\sigma}_{ee}}\eta_j\cdot n^H[B^{H,2}_{s_{\sigma(j)}-\frac{s_{\sigma(j)-1}}{n}}-B^{H,2}_{s_{\sigma(j)}}]+\sum_{j\in I^{\sigma}_{eo}}\eta_j\cdot n^H[B^{H,2}_{s_{\sigma(j)+1}-\frac{s_{\sigma(j)}}{n}}-B^{H,2}_{s_{\sigma(j)+1}}]\Big).
\end{align*}

Applying Cauchy-Schwartz inequality gives
\begin{align*}
&\int_{\R^{2md}}\exp\Big\{-\frac{1}{2}\big[I_1(\xi,\eta)+I_2(\xi,\eta)\big]\Big\}\, d\xi\, d\eta\\
&\leq \Big(\int_{\R^{2md}}\exp\big\{-I_1(\xi,\eta)\big\}\, d\xi\, d\eta\Big)^{\frac{1}{2}}\Big(\int_{\R^{2md}}\exp\big\{-I_2(\xi,\eta)\big\}\, d\xi\, d\eta\Big)^{\frac{1}{2}}.
\end{align*}
Using similar arguments as in the proof of Proposition 3.4 in \cite{hnx}, we obtain
\[
\int_{\R^{2md}}\exp\big\{-I_1(\xi,\eta)\big\}\, d\xi\, d\eta
\leq c_3 \prod_{k=1}^{m} (u_{2k-1})^{-Hd} \big(u_{2k } -\frac{u_{2k-1}}n -  u_{2k-2}\big)^{-Hd}
\]
and 
\[
\int_{\R^{2md}}\exp\big\{-I_2(\xi,\eta)\big\}\, d\xi\, d\eta
\leq c_4 \prod_{k=1}^{m}  (v_{2k-1})^{-Hd} \big(v_{2k } -\frac{v_{2k-1}}n -  v_{2k-2}\big)^{-Hd}.
\]

Therefore, for all $p$ such that $1\leq p<\frac{2}{Hd}$,
\begin{align*}
\sup_n\int_{D_{n,K}} \big| p_n(\frac{x}{n^H},y)\big|^p\, dw\, ds
\leq c_5,
\end{align*}
where $c_5$ is a positive constant independent of $n$, $x$ and $y$.

Let 
\[
I_{n,K}=\int_{\R^{2md}} \int_{D_{n,K}}  F(x,y)\, p_n(\frac{x}{n^H},y)\, dw\, ds\, dx\, dy.
\]
Thus, taking into account that the function $F(x,y)$ is continuous and has compact support, by the dominated convergence theorem, we obtain
 \[
 \lim_{n\to \infty} I_{n,K}=\int_{\R^{2md}} F(x,y) \Big( \lim_{n\to \infty} \int_{D_{n,K}}   p_n(\frac{x}{n^H},y)\, dw\, ds\Big)\, dx\, dy.
 \]

On the other hand, there exists $p>1$ such that
\[
 \sup_n \int_{D_{n,K}} |p_n(\frac{x}{n^H},y)|^p\, dw\, ds<\infty,
\]
 which implies
 \[
 \lim_{n\to \infty} I_{n,K}=\int_{\R^{2md}}\int_{\R^{2m}} F(x,y) \lim_{n\to \infty} \mathbf{1}_{D_{n,K}} ((w,s))\, p_n(\frac{x}{n^H},y)\, dw\, ds\, dx\, dy.
 \]
With the same notation as in \textbf{Step 1} we get
\begin{align*} \label{cmpt}
 \lim_{n\to \infty} I_{n,K}
  &=     \frac{(2m)! }{(2\pi )^{md}}
   \Big(\int_{O}  \big[\det A^{\sigma}(w, s)\big]^{-\frac{1}{2}}  dw\, ds \Big) \notag  \\
  & \times \int_{\R^{2md}} F(x,y)  \prod\limits_{j\in I_e} \int_0^K\int^{K}_{0} (u^{2H}+v^{2H})^{-\frac{d}{2}} \big( e^{-\frac{1}{2}\frac{|y_j|^2}{u^{2H}+v^{2H}}}-1\big)\, du\, dv\, dx\, dy.
\end{align*}
The right hand side of the above equality converges to the term in \eref{limit} as $K$ tends to
infinity.

\medskip \noindent
\textbf{Step 3} \quad We need to show that
\begin{equation} \label{unbd}
\lim_{K\to\infty}\limsup_{n\to\infty}\int_{\R^{2md}} \int_{D_{n}-D_{n,K}}  F(x,y)\, p_n(\frac{x}{n^H},y)\,dw \, ds\, dx\, dy=0. 
\end{equation}
Recall the equation \eref{moment2} and the notation $D^n_{K,\ell}$ in \eref{dnkl}.
\begin{align*}
&\int_{\R^{2md}} \int_{D_{n}-D_{n,K}}  F(x,y)\, p_n(\frac{x}{n^H},y)\,dw \, ds\, dx\, dy\\
&\qquad=n^{m(2+Hd)} \E \Big[
           \int_{\cup^m_{\ell=1} D^n_{K,\ell}} \prod_{j=1}^{2m}f\big(n^{H}B^{H,1}_{u_j}-n^{H}B^{H,2}_{v_{\sigma(j)}}\big)\, du\, dv \Big].
\end{align*}
Therefore, the statement in \eref{unbd} follows from Lemma \ref{K}. The proof is completed. \end{proof}

\medskip

{\medskip \noindent \textbf{Proof of Theorem \ref{thm1}.}}  This follows
from Propositions \ref{tight}, \ref{odd1} and \ref{even} by the method of moments.

\section{Appendix}

Here we give some lemmas which are necessary in the proof of Theorem \ref{thm1}.

\begin{lemma}  \label{lema2} Assume that $1<Hd<2$. There exists a positive contant $c$ such that 
\[
\int^a_0\int^b_0  (w^{2H}+s^{2H})^{-\frac{d}{2}}\, dw\, ds \leq c\, (a\wedge b)^{2-Hd}.
\]
\end{lemma}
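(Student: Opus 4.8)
The plan is to reduce the whole estimate to a single scaling computation together with two elementary one-dimensional integrability checks. Since the integrand $(w^{2H}+s^{2H})^{-d/2}$ is symmetric in $w$ and $s$, I would first assume without loss of generality that $a\le b$, so that $a\wedge b=a$; the case $b\le a$ is identical after interchanging the two variables.

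First I would rescale by setting $w=aw'$ and $s=as'$. Using the homogeneity $(a^{2H}(w')^{2H}+a^{2H}(s')^{2H})^{-d/2}=a^{-Hd}((w')^{2H}+(s')^{2H})^{-d/2}$ together with $dw\,ds=a^2\,dw'\,ds'$, the integral becomes
\[
a^{2-Hd}\int_0^1\int_0^{b/a}\big((w')^{2H}+(s')^{2H}\big)^{-\frac d2}\,ds'\,dw'.
\]
Since $b/a\ge1$, I would bound the inner domain from above by letting $s'$ range over all of $[0,\infty)$. This reduces the lemma to showing that the constant
\[
C:=\int_0^1\int_0^\infty (w^{2H}+s^{2H})^{-\frac d2}\,ds\,dw
\]
is finite, in which case $c=C$ does the job and the factor $a^{2-Hd}=(a\wedge b)^{2-Hd}$ is exactly what is claimed.

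Next I would compute the inner $s$-integral for fixed $w\in(0,1)$ via the substitution $s=wt$, which yields $\int_0^\infty(w^{2H}+s^{2H})^{-d/2}\,ds=w^{1-Hd}\int_0^\infty(1+t^{2H})^{-d/2}\,dt$. The $t$-integral converges at infinity precisely because its integrand decays like $t^{-Hd}$ and $Hd>1$; this is where the lower hypothesis enters. Finally, integrating the resulting factor $w^{1-Hd}$ over $(0,1)$ converges precisely because $1-Hd>-1$, i.e. $Hd<2$, which is where the upper hypothesis enters. This gives $C=\tfrac1{2-Hd}\int_0^\infty(1+t^{2H})^{-d/2}\,dt<\infty$, completing the argument.

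There is essentially no obstacle here: the content of the lemma is simply that the exponents align so that both the singularity at the origin and the decay at infinity are integrable, and the two endpoints of the range $1<Hd<2$ are exactly the thresholds making the $t$-integral (via $Hd>1$) and the $w$-integral (via $Hd<2$) converge. The only step requiring care in the write-up is the scaling reduction and the symmetry remark that handles $a\wedge b$; after that, everything is an elementary one-dimensional integrability check.
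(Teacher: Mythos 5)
Your proof is correct and is essentially the paper's own argument: after the same WLOG reduction $a\le b$, the paper substitutes $v=s/w$ directly, obtaining $\int_0^a w^{1-Hd}\,dw\int_0^\infty(1+v^{2H})^{-d/2}\,dv$, which is exactly your computation with the preliminary rescaling $w=aw'$, $s=as'$ folded in (the factor $a^{2-Hd}$ emerges from the $w$-integral rather than being pulled out first). The two hypotheses $Hd>1$ and $Hd<2$ enter in precisely the same two places in both versions.
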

\begin{proof} 
Without loss of generality, we can assume that $a\leq b$. Making the change of variable $v=s/w$ gives
\begin{align*}
\int^a_0\int^b_0  (w^{2H}+s^{2H})^{-\frac{d}{2}}\, dw\, ds
&=\int^a_0\int^{\frac{b}{w}}_0 w^{1-Hd}  (1+v^{2H})^{-\frac{d}{2}}\, dv\, dw \\
& \leq \int^{a}_0\int^{\infty}_0 w^{1-Hd}  (1+v^{2H})^{-\frac{d}{2}}\, dv\, dw\\
&\leq c_1a^{2-Hd},
\end{align*}
where $c_1$ is a positive constant independent of $b$.
\end{proof}

\begin{lemma} \label{lema3}  Assume that $2-H<Hd<2$. Let $X$ be a $d$-dimensional centered normal random vector with covariance matrix $\sigma^2I$. Then, for any $n\in\N$ and $y\in\R^d$, there exists a positive constant $c$ depending only on $H$ and $d$ such that
\[
\int^{\infty}_0\int^{\infty}_{0} (w^{2H}+s^{2H})^{-\frac{d}{2}}\, \E\Big|\exp\big(\iota\frac{y\cdot X}{n^H\sqrt{w^{2H}+s^{2H}}}\big)-1\Big|\, dw\, ds \leq c\, n^{Hd-2}|y|^{\frac{2}{H}-d}.
\]
\end{lemma}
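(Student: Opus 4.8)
The plan is to reduce the inner expectation to a function of a single scalar standard deviation, and then to exploit the scaling of $(w^{2H}+s^{2H})^{-\frac d2}$ to extract the factor $n^{Hd-2}|y|^{\frac 2H-d}$, leaving a purely dimensionless integral whose finiteness is exactly what the hypotheses on $Hd$ guarantee. Concretely, first I would observe that, since $X$ is centered Gaussian with covariance $\sigma^2 I$, the real scalar $Z=\frac{y\cdot X}{n^H\sqrt{w^{2H}+s^{2H}}}$ is centered normal with standard deviation $\tau=\frac{\sigma|y|}{n^H\sqrt{w^{2H}+s^{2H}}}$. Using the two elementary bounds $|e^{\iota Z}-1|\le 2$ and $|e^{\iota Z}-1|\le|Z|$ (the latter valid because $Z$ is real), together with $\E|Z|=\tau\sqrt{2/\pi}$, I obtain
\[
\E\big|e^{\iota Z}-1\big|\le \min\big(2,\tau\sqrt{2/\pi}\big)\le 2\,\min(1,\tau).
\]
Thus it suffices to bound $\int_0^\infty\int_0^\infty (w^{2H}+s^{2H})^{-\frac d2}\,\min\big(1,\tfrac{a}{\sqrt{w^{2H}+s^{2H}}}\big)\,dw\,ds$, where $a=\sigma|y|/n^H$.

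Next I would rescale by setting $w=a^{1/H}\tilde w$ and $s=a^{1/H}\tilde s$, so that $\sqrt{w^{2H}+s^{2H}}=a\,\tilde r$ with $\tilde r=\sqrt{\tilde w^{2H}+\tilde s^{2H}}$ and $dw\,ds=a^{2/H}\,d\tilde w\,d\tilde s$. The integral then factors as $a^{\frac 2H-d}\,I$, where
\[
I=\int_0^\infty\int_0^\infty \tilde r^{-d}\,\min\big(1,\tilde r^{-1}\big)\,d\tilde w\,d\tilde s.
\]
Since $a^{\frac 2H-d}=\sigma^{\frac 2H-d}|y|^{\frac 2H-d}\,n^{Hd-2}$, this already produces the announced powers of $n$ and $|y|$ (here $\sigma$ is a fixed parameter, and its power is absorbed into the constant $c$). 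It therefore remains only to prove that $I<\infty$.

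Finally I would split $I$ according to $\tilde r\le 1$ and $\tilde r>1$. On $\{\tilde r\le 1\}$, which is contained in $[0,1]^2$, the minimum equals $1$ and the integrand is $\tilde r^{-d}$, so Lemma \ref{lema2} (applicable since $1<Hd<2$, because $Hd>2-H>1$) bounds this piece by a constant. On $\{\tilde r>1\}$ the minimum equals $\tilde r^{-1}$ and the integrand is $(\tilde w^{2H}+\tilde s^{2H})^{-\frac{d+1}2}$; making the change of variable $v=\tilde s/\tilde w$ as in the proof of Lemma \ref{lema2}, the inner $\tilde w$-integral becomes $\int_{(1+v^{2H})^{-1/(2H)}}^\infty \tilde w^{1-H(d+1)}\,d\tilde w$, which converges precisely when $H(d+1)>2$, i.e. $Hd>2-H$, and leaves the finite outer integral $\int_0^\infty(1+v^{2H})^{-1/H}\,dv$. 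This last convergence, resting entirely on the assumption $Hd>2-H$, is the crux of the argument and the only place that hypothesis is genuinely needed; everything else is routine bookkeeping. Collecting the two pieces yields $I<\infty$ and hence the stated bound.
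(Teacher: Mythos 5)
Your proposal is correct and follows essentially the same route as the paper: a scaling change of variables that pulls out the factor $n^{Hd-2}|y|^{\frac 2H-d}$, the elementary bound $\E\big|e^{\iota Z}-1\big|\le \min\big(2,\E|Z|\big)$, and then finiteness of the resulting dimensionless integral under $2-H<Hd<2$. The only (cosmetic) difference is at the last step, where the paper invokes polar coordinates without detail while you split at $\tilde r=1$, handle the inner region via Lemma \ref{lema2} and the outer region via the ratio substitution $v=\tilde s/\tilde w$; your version is, if anything, more explicit about where the hypothesis $Hd>2-H$ enters.
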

\begin{proof} It suffices to show the above inequality when $y\neq 0$. Making the change of variables $u=|y|^{-\frac{1}{H}}nw$ and $v=|y|^{-\frac{1}{H}}ns$ gives
\begin{align*}
&\int^{\infty}_0\int^{\infty}_{0} (w^{2H}+s^{2H})^{-\frac{d}{2}}\, \E\Big|\exp\big(\iota\frac{y\cdot X}{n^H\sqrt{w^{2H}+s^{2H}}}\big)-1\Big|\, dw\, ds\\
&=n^{Hd-2}|y|^{\frac{2}{H}-d} \int^{\infty}_0\int^{\infty}_{0} (u^{2H}+v^{2H})^{-\frac{d}{2}}\, \E\Big|\exp\big(\iota\frac{y\cdot X}{|y|\sqrt{u^{2H}+v^{2H}}}\big)-1\Big|\, du\, dv\\
&\leq n^{Hd-2}|y|^{\frac{2}{H}-d} \int^{\infty}_0\int^{\infty}_{0} (u^{2H}+v^{2H})^{-\frac{d}{2}}\Big(2\wedge  (u^{2H}+v^{2H})^{-\frac{1}{2}}\E|X| \Big) \, du\, dv\\
&= c\, n^{Hd-2}|y|^{\frac{2}{H}-d}.
\end{align*}
The last equality follows from using polar coordinates and the assumption $2-H<Hd<2$.
\end{proof}

\end{document}